\newtheorem{them}{Theorem}[section]
\newtheorem{rem}[them]{Remark}
\numberwithin{equation}{section}
\newcommand{\norm}[1]{\left\Vert#1\right\Vert}
\begin{document}
\title{Numerical solution of the boundary value problem of elliptic equation by Levi function scheme}
\author{Jinchao Pan$^\dag$ \quad Jijun Liu$^{\dag\;\ddag}$\;\thanks{Corresponding author: Prof. Dr. J.J.Liu, email: jjliu@seu.edu.cn}\\
$^\dag$School of Mathematics/Shing-Tung Yau Center of Southeast University\\ Southeast University\\
Nanjing, 210096, P.R.China\\
$^{\dag\;\ddag}$Nanjing Center for Applied Mathematics\\
Nanjing, 211135, P.R.China
}

\maketitle

%
\begin{abstract}
For boundary value problem of an elliptic equation with variable coefficients  describing the physical field distribution in inhomogeneous media, the Levi function  can represent the solution in terms of volume and surface potentials, with the drawback that the volume potential involving in the solution expression requires heavy computational costs as well as the solvability of the integral equations with respect to the density pair. We introduce an modified integral expression for the solution to an elliptic equation in divergence form under the Levi function framework. The well-posedness of the linear integral system with respect to the density functions to be determined is rigorously proved. Based on the singularity decomposition for the Levi function, we propose two schemes to deal with the volume integrals so that the density functions can be solved efficiently. One method is an adaptive discretization scheme (ADS) for computing the integrals with continuous integrands, leading to the uniform accuracy of the integrals in the whole domain, and consequently the efficient computations for the density functions. The other  method is the dual reciprocity method (DRM) which is a meshless approach converting the volume integrals into  boundary integrals equivalently by expressing the volume density as the combination of the radial basis functions determined by the interior grids. The proposed schemes are justified numerically to be of satisfactory computation costs. Numerical examples in 2-dimensional and 3-dimensional cases are presented to show the validity of the proposed schemes.
\end{abstract}

\vskip 0.3cm
%
{\bf Keywords:} Levi function, integral equations, solvability, adaptive scheme, numerics.

\vskip 0.3cm

{\bf AMS Subject Classifications:} 35L05, 35L10, 35R09

\section{Introduction}

For a bounded smooth domain $\Omega\subset \mathbb{R}^d$ with $d=2,3$, consider the system
\begin{eqnarray}\label{liu11-01}
   \begin{cases}
  -\nabla\cdot(\sigma(x)\nabla u(x))=F(x), &x \in \Omega,\\
  u(x)=f(x), &x\in\partial \Omega
   \end{cases}
\end{eqnarray}
with $0<\sigma_0<\sigma(x)\in C(\overline\Omega)$. It is well-known that
there exists a unique solution $u\in H^1(\Omega)$  for given $F\in L^2(\Omega)$ and $f \in H^{1/2}(\partial\Omega)$, see \cite{Evans}. The above boundary value problem is an mathematical model describing the distribution of electrical potential $u(x)$ with $\sigma(x)$ being the conductivity in conductive media $\Omega\subset \mathbb{R}^d$.

For the boundary value problem \eqref{liu11-01}, it is well-known that the solution can be solved numerically by finite element method (FEM) for the domain $\Omega$ of general shape. Based on the variational form of \eqref{liu11-01}, the solution $u$ is obtained in the whole domain from its base function expansions constructing in terms of the mesh discretizations of the domain $\Omega$. However, there exists another way to solve the solution $u$ for the elliptic equation with constant coefficients, namely, the boundary elements method (BEM), which represents the solution in terms of the surface potential with some density function to be determined. Once the density function in the surface layer potential is determined from the boundary condition, the solution in every point of $\Omega$ is obtained by computing the surface integral defined in $\partial\Omega$. Such techniques have been studied for solving PDEs in different forms such as the Helmholtz equation, the Lame system as well as the biharmoni equations, see \cite{George}. The other applications of BEM are in the areas of inverse and ill-posed problems, where the adjoint operators are relatively easy to be derived due to the integral representations of the solution, when we find the regularizing solutions by minimizing the corresponding cost functional iteratively, see \cite{Chap1, Chap2, Colton1}.

However, in the cases of the PDEs with variable coefficients governing the physical field in inhomogeneous media, the BEM scheme does not work anymore for finding its solution, since the fundamental solution to the differential operator with variable coefficients, although exists, cannot be represented explicitly. In these cases, it is also well-known that,
the Levi function pair $(P(x,y), R(x,y))$  for elliptic operator $A(x,\partial)$ with variable coefficients defined by
$$A(x,\partial)P(x,y)=\delta(x-y)+R(x,y),$$
which is considered as the generalization of the fundamental solution, plays an important rule in theoretical studies for PDEs \cite{Pomp, Mir}.

Although such a generalized expression scheme has been studied thoroughly, the numerical realizations seem not to be implemented efficiently, due to the reason that the solvability of the corresponding integral system with respect to the density functions heavily depends on the potentials, e.g., the single-layer potential or double-layer one, as well as the form of boundary conditions. In \cite{Beshley}, the system  \eqref{liu11-01} with $F(x)=0$ is solved numerically based on the solution representation
\begin{equation}\label{liu11-01-01}
u(x):=\int_{\Omega}\mu(y)P(x,y)dy+\int_{\partial\Omega}\psi(y)P(x,y)ds(y)
\end{equation}
for Levi function $P(x,y)$. The discretization scheme for solving the corresponding integral system is established there, the numerical examples show the potential efficiency of this scheme for \eqref{liu11-01}, see also \cite{Mikha}. However, due to the single-layer representation $\int_{\partial\Omega}\psi(y)P(x,y)ds(y)$ in \eqref{liu11-01-01}, the derived boundary integral equation on $\partial\Omega$ from the Dirichlet boundary condition in \eqref{liu11-01} is of the first kind with respect to the density $\psi$. Consequently, the solvability of the corresponding integral system with respect to the density pair $(\mu,\psi)$ cannot be ensured, which means the Levi function scheme \eqref{liu11-01-01} for solving \eqref{liu11-01} is still questionable.

On the other hand, the domain integral in \eqref{liu11-01-01} involving the density $\mu$ to be  determined is the novel and necessary term of Levi function scheme for solving PDE with variable coefficients. Therefore we need to evaluate the first term in \eqref{liu11-01-01} in an efficient way for obtaining the solution $u$ with satisfactory accuracy. An efficient way to the computation of  this domain integral, as proposed in \cite{Beshley} for $\Omega\subset \mathbb{R}^2$, is to establish the domain transform $\Pi$ to $\Omega$ in terms of known coordinates transform, where $\Pi$ is a standard rectangle domain. Then the first term in \eqref{liu11-01-01} can be discretized in the new coordinate in $\Pi$. However, the coordinates transform, although enables us to compute the integral in a rectangle domain $\Pi$ which is easy to be partitioned, may change the area of each elements in $\Omega$ non-uniformly due to the fact that the Jacobian determinant depends on the position of the elements. This new phenomenon requires some nonuniform partition in $\Pi$ for computing the integral in $\Omega$ with a unform accuracy, otherwise the density function $\mu$ in $\Omega$ will be of different accuracy and consequently contaminates the accuracy of $u$ for \eqref{liu11-01} from the Levi function representation. Moreover, to compute the volume potential accurately, the domain $\Omega$ should be partitioned into elements with very small size, and consequently the evaluations of the unknown $\mu$ at the partition grids lead to the solution of a linear algebra equations of large size, which is of heavy computational cost. To overcome this difficulty, a possible way is to convert the domain integral in $\Omega$ into the boundary integral on $\partial\Omega$ by expanding the density function $\mu$ in terms of some specified base functions, such as the dual reciprocity method (DRM)  \cite{DRM_Nardini} or the radial integration method (RIM) \cite{RIM_Gao}.

Based on the above motivations, we make novel contributions for numerically solving \eqref{liu11-01} under the Levi function framework from three aspects. Firstly, instead of the single layer boundary potential in \eqref{liu11-01-01} for the representation of the solution, we apply the double-layer potential, which is different from those in \cite{Mir} and leads to a linear integral equation of the second kind for the density pair $(\mu,\psi)$.  Secondly, the solvability of this linear system  is established rigorously in two different function spaces, which ensures the applicability of the Levi function scheme for \eqref{liu11-01}. Finally,  after the standard singularity decompositions dealing with the hyper-singularity of the integral term in the linear system coming from the double layer potential, we propose two discrete schemes for computing the volume potential efficiently, namely, the self-adaptive partition scheme (ADS) with explicit quantitative criterion for the non-uniform partition of $\Omega$ and the dual reciprocity method (DRM) leading to a domain discretization-free method.
The proposed two schemes for solving the density function pair provide efficient numerical realizations of solving \eqref{liu11-01-01} in terms of the Levi function framework.
Numerical experiments show that the proposed schemes can yield the solution $u$ on arbitrary internal points with satisfactory accuracy compatible to well-known finite element methods (FEM).

In general, the treatments of the solution of elliptic equation with variable coefficients under the framework of integral equations, as pointed in \cite{Beshley}, are realized in three main ways.  The first scheme is to decompose the variable coefficient into
a constant and a non-constant part, then the term with the variable coefficient
is viewed as a source term, and the direct integral equation approach leads to a boundary-domain integral equation \cite{Ang,Cle,Poz}.  The second scheme
is based on the Green's formula in combination with a Levi function \cite{Jaw, Chk,Cle2}. The third way is what we consider here, namely, the indirect integral equation formulation using potential representation of the solution by Levi functions.

We organize the paper as follows.
In section 2, we establish a boundary integral equation system for the density pair $(\mu,\psi)$ under the Levi function scheme for computing the solution to \eqref{liu11-01}. This new system is different from the cases for the Laplace or the Helmholtz equations with constant coefficients which involve only the surface potentials. The solvability of the coupled system is rigorously proving in two different spaces. Then in section 3, we propose to solve the corresponding linear system with a self-adaptive scheme (ADS) with explicit quantitative criterion for dealing with the smooth integrands, while the singularities in the kernel functions are decomposed from the property of the Levi function, and then can be computed from the well-known quadrature rules.
In section 4, the dual reciprocity method (DRM) which transforms the domain integrals into boundary integrals is applied to compute the volume potential without domain discretization. The choice of interior nodes for constructing the base functions are random and DRM can compute the weak singularity in volume potential explicitly by transforming the volume integrals into boundary integrals. In section 5, numerical experiments of our proposed schemes are implemented for 2-dimensional and 3-dimensional cases, showing the validity by comparing the numerics with finite element methods.

\section{Integral representation of the solution}

For differential operator $\mathcal{L}\diamond:=-\nabla\cdot(\sigma\nabla\diamond)$ in $\mathbb{R}^d$, we call the function $P(x,y)$ with $x,y\in \mathbb{R}^d$ the Levi function, provided
\begin{equation}\label{liu22-01}
\mathcal{L}_x P(x,y)=\delta(x-y)+R(x,y)
\end{equation}
in the distribution sense,
where $\delta$ is the standard Dirac function, and $R(x,y)$ is a function of weak singularity for $x=y$.
Notice, the Levi function $P(x,y)$ and consequently the reminder $R(x,y)$ are not unique.

Introduce the fundamental solution
\begin{eqnarray*}
\Phi(x,y):=
\begin{cases}
\frac{1}{2\pi}\ln\frac{1}{|x-y|}, &d=2\\
\frac{1}{4\pi}\frac{1}{|x-y|}, &d=3
\end{cases}
\end{eqnarray*}
to the Laplacian operator, i.e., $-\Delta_x \Phi(x,y)=\delta(x-y)$.
It is well known that one of the form of the Levi function pair for $\mathcal{L}$ is
\begin{eqnarray}\label{liu22-3}
(P(x,y),R(x,y))&=&\frac{1}{\sigma(y)}(\Phi(x,y),-\nabla_x\Phi(x,y)\cdot\nabla\sigma(x))
\nonumber\\&=&
\begin{cases}
(\frac{1}{\sigma(y)}\frac{1}{2\pi}\ln\frac{1}{|x-y|}, \frac{1}{\sigma(y)}\frac{(x-y)\cdot \nabla\sigma(x)}{2\pi|x-y|^2}), &d=2\\
(\frac{1}{\sigma(y)}\frac{1}{4\pi|x-y|}, \frac{1}{\sigma(y)} \frac{(x-y)\cdot\nabla\sigma(x)}{4\pi|x-y|^3}), &d=3
\end{cases}
\end{eqnarray}
for $x\not=y$. Note, $R(x,y)$ is of the weak singularity from $|R(x,y)|\le C|x-y|^{-(d-1)}$.


The classical Levi function scheme represents the solution to (\ref{liu11-01}) by \eqref{liu11-01-01},
which can be considered as a combination of the volume potential in $\Omega$ and the
single-layer potential in $\partial\Omega$. Then by inserting this representation into both the equation and the boundary condition  in \eqref{liu11-01}, a linear system with respect to the density pair $(\mu|_\Omega,\psi|_{\partial\Omega})$ is derived. However, the corresponding integral equation from the boundary condition is the first kind, and consequently, the solvability of the derived integral system, i.e., the reasonability of the standard   Levi function scheme, cannot be ensured theoretically, although this scheme has been realized efficiently. To overcome such a shortcoming, we propose to modify the representation \eqref{liu11-01-01} as
\begin{equation}\label{liu22-05}
u(x)=\int_{\Omega_{\epsilon_0}}\mu(y)P(x,y)dy+\int_{\partial\Omega}\frac{\psi(y)}{\sigma(y)}\partial_{\nu(y)}\Phi(x,y)ds(y), \quad x\in\Omega
\end{equation}
for some density functions $\mu\in C(\overline\Omega_{\epsilon_0})$ and $\psi\in C(\partial\Omega)$, where $\Omega_{\epsilon_0}\subset\subset\Omega$ is arbitrary specified domain. Notice, the volume potential \eqref{liu22-05} is defined in the interior domain $\Omega_{\epsilon_0}$ instead of the whole domain $\Omega$, which is crucial for us to avoid the hyper-singularity of double-layer potential in \eqref{liu22-05} in the PDE in $\Omega$.
By the $C^1(\mathbb{R}^d)$ regularity of volume potential (Theorem 8.1, \cite{Colton1}) and the jump relation of the double potential potential as well as the continuity of the single potential (Theorem 2.13, Theorem 2.12, \cite{Colton}), if $u(x)$ represented by (\ref{liu22-05}) solves (\ref{liu11-01}), $(\mu,\psi)\in C(\overline\Omega_{\epsilon_0})\times C(\partial\Omega)$ must solve
\begin{eqnarray}\label{liu22-06}
\begin{cases}
\mu(x)+\int_{\Omega_{\epsilon_0}}\mu(y)R(x,y)dy+\int_{\partial\Omega}\frac{\psi(y)}{\sigma(y)}\partial_{\nu(y)}( \sigma(y)R(x,y))ds(y)=F(x), &x\in\Omega_{\epsilon_0}\\
\int_{\Omega_{\epsilon_0}}\mu(y)P(x,y)dy+\int_{\partial\Omega}\frac{\psi(y)}{\sigma(y)}\partial_{\nu(y)}\Phi(x,y)ds(y)-
\frac{1}{2}\frac{\psi(x)}{\sigma(x)}
=f(x), &x\in\partial\Omega.
\end{cases}
\end{eqnarray}

In case of $(\sigma(x), F(x))\equiv (1, 0)$, it follows from the first equation that $\mu(x)\equiv 0$ and the second equation in \eqref{liu22-06} is just the boundary integral equation of the second kind with respect to the density function $\psi(x)$ for  solving the interior Dirichlet probelm for the Laplace
equation in $\Omega$, noticing that $\Omega_{\epsilon_0}$ is the arbitrary domain in $\Omega$.

In the following we only consider 2-dimensional space. However, all the arguments work for the case $d=3$ with obvious modifications on the fundamental solution.

Define $\tilde\mu(x):=\frac{\mu(x)}{\sigma(x)}$ for $x\in\overline\Omega_{\epsilon_0}$ and $\tilde\psi(x):=\frac{\psi(x)}{\sigma(x)}$ for $x\in\partial\Omega$, and the operators
\begin{align*}
\mathbb{K}_{11}^{\Omega_{\epsilon_0}\to\overline\Omega_{\epsilon_0}}[\tilde\mu](x):
&=\int_{\Omega_{\epsilon_0}} \nabla_x\Phi(x,y)\cdot\nabla\ln\sigma(x)
\tilde\mu(y)dy,&x\in\overline\Omega_{\epsilon_0},\\
\mathbb{K}_{12}^{\partial\Omega\to\overline\Omega_{\epsilon_0}}[\tilde\psi](x):
&=-\int_{\partial\Omega}
\frac{\partial}{\partial\nu(y)}\left(\nabla_x\Phi(x,y)\cdot\nabla\ln\sigma(x)\right)\tilde\psi(y)ds(y),&x\in\overline\Omega_{\epsilon_0},\\
\mathbb{K}_{21}^{\Omega_{\epsilon_0}\to\partial\Omega}[\tilde\mu](x):
&=-2\int_{\Omega_{\epsilon_0}}\Phi(x,y)
\tilde\mu(y)dy,  &x\in\partial\Omega,\\
\mathbb{K}_{22}^{\partial\Omega\to\partial\Omega}[\tilde\psi](x):
&=2\int_{\partial\Omega}
\frac{\partial\Phi(x,y)}{\partial\nu(y)}
\tilde\psi(y)ds(y),      &x\in\partial\Omega.
\end{align*}
Then we can rewrite (\ref{liu22-06}) in the operator form
\begin{eqnarray}\label{liu22-07}
\begin{cases}
(\mathbb{I}-\mathbb{K}_{11}^{\Omega_{\epsilon_0}\to\Omega_{\epsilon_0}})[\tilde\mu](z)+
\mathbb{K}_{12}^{\partial\Omega\to\Omega_{\epsilon_0}}[\tilde\psi](z)=F(z)\sigma^{-1}(z), \quad z\in \Omega_{\epsilon_0}\\
\mathbb{K}_{21}^{\Omega_{\epsilon_0}\to\partial\Omega}[\tilde\mu](x)+(\mathbb{I}-
\mathbb{K}_{22}^{\partial\Omega\to\partial\Omega})[\tilde\psi](x)=-2f(x),\qquad \quad x\in\partial\Omega.
\end{cases}
\end{eqnarray}

We firstly establish the solvability of \eqref{liu22-07} for the density function pair in continuous function space for known $(\sigma, F, f)$ by the following result.

\begin{them}
Assume that the specified conductivity in \eqref{liu11-01} meets $0<\sigma_0\le \sigma\in C^1(\overline\Omega)$ and
\begin{equation}\label{liu22-09}
0\le \|\nabla\ln\sigma\|_{C(\overline\Omega)}\ll 1.
\end{equation}
Then for any  $(F,f)\in C(\overline\Omega)\times C(\partial\Omega)$,
there exists a unique solution $(\tilde\mu,\tilde\psi)\in C(\overline\Omega_{\epsilon_0})\times C(\partial\Omega)$ to \eqref{liu22-07},
where $\Omega_{\epsilon_0}:=\{x\in\Omega:\; \hbox{dist}(x,\partial\Omega)\ge\epsilon_0\}$ with any small $\epsilon_0>0$. Moreover,
$u(x,t)$ defined by \eqref{liu22-05} meets the PDE in $\Omega_{\epsilon_0}$ and the the boundary condition in $\partial\Omega$.
\end{them}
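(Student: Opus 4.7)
The plan is to recast \eqref{liu22-07} as a single operator equation $(\mathbb{I}-\mathbb{T})\vec w=\vec g$ on the product Banach space $X:=C(\overline\Omega_{\epsilon_0})\times C(\partial\Omega)$, with $\vec w=(\tilde\mu,\tilde\psi)$ and $\mathbb{T}$ the $2\times2$ block operator assembled from the four $\mathbb{K}_{ij}$, and then to invoke the Riesz--Fredholm framework. The first step is compactness of $\mathbb{T}$ on $X$. This splits naturally: $\mathbb{K}_{12}$ and $\mathbb{K}_{21}$ have smooth kernels because $\mathrm{dist}(\overline\Omega_{\epsilon_0},\partial\Omega)\geq\epsilon_0>0$ prevents coincidence of arguments, so Arzel\`a--Ascoli gives compactness; $\mathbb{K}_{11}$ is a weakly singular volume integral of order $|x-y|^{1-d}$, which is classically compact on $C(\overline\Omega_{\epsilon_0})$; and $\mathbb{K}_{22}$ is the standard Laplace double-layer operator, compact on $C(\partial\Omega)$ for sufficiently smooth $\partial\Omega$.

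The second step is invertibility, which I would establish directly (rather than through a separate injectivity proof) via a perturbation argument powered by \eqref{liu22-09}. Decompose
\begin{equation*}
\mathbb{I}-\mathbb{T}=\mathbb{A}-\mathbb{B},\qquad
\mathbb{A}=\begin{pmatrix}\mathbb{I} & 0 \\ \mathbb{K}_{21} & \mathbb{I}-\mathbb{K}_{22}\end{pmatrix},\qquad
\mathbb{B}=\begin{pmatrix}\mathbb{K}_{11} & -\mathbb{K}_{12} \\ 0 & 0\end{pmatrix}.
\end{equation*}
The block-triangular operator $\mathbb{A}$ corresponds to the limit $\nabla\sigma\equiv 0$; its invertibility on $X$ reduces to the classical invertibility of the second-kind equation $\mathbb{I}-\mathbb{K}_{22}$ on $C(\partial\Omega)$ for the interior Laplace--Dirichlet problem, yielding a bound $\|\mathbb{A}^{-1}\|\leq C(\Omega,\epsilon_0)$. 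Since each nonzero entry of $\mathbb{B}$ carries the factor $\nabla\ln\sigma$ explicitly, a uniform estimate gives $\|\mathbb{B}\|_{X\to X}\leq C_\Omega\|\nabla\ln\sigma\|_{C(\overline\Omega)}$. Under \eqref{liu22-09} this forces $\|\mathbb{A}^{-1}\mathbb{B}\|<1$, so the Neumann series applied to $\mathbb{A}(\mathbb{I}-\mathbb{A}^{-1}\mathbb{B})$ produces a bounded inverse for $\mathbb{I}-\mathbb{T}$ and hence the unique $(\tilde\mu,\tilde\psi)\in X$ for every $(F,f)\in C(\overline\Omega)\times C(\partial\Omega)$.

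Finally, that $u$ from \eqref{liu22-05} satisfies the PDE on $\Omega_{\epsilon_0}$ and the Dirichlet condition on $\partial\Omega$ is obtained by retracing the derivation of \eqref{liu22-06}: applying $\mathcal{L}_x$ to \eqref{liu22-05} on $\Omega_{\epsilon_0}$ and invoking the defining identity \eqref{liu22-01} for the Levi function together with the $C^1$-regularity of the volume potential reproduces the first equation of \eqref{liu22-06}, while the interior trace combined with the standard jump relation of the double-layer potential reproduces the second. I expect the principal technical hurdle to be the quantitative norm bound $\|\mathbb{B}\|\leq C_\Omega\|\nabla\ln\sigma\|_{C(\overline\Omega)}$, which requires uniform control of the weakly singular volume operator $\mathbb{K}_{11}$ across the closed subdomain $\overline\Omega_{\epsilon_0}$, and a careful tracking of how $C_\Omega$ and $\|\mathbb{A}^{-1}\|$ depend on $\Omega$ and $\epsilon_0$, since together they fix the admissible threshold implicit in \eqref{liu22-09}.
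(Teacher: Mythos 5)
Your proposal is correct, and the essential quantitative ingredients coincide with the paper's: boundedness of $\mathbb{K}_{21}^{\Omega_{\epsilon_0}\to\partial\Omega}$ (cf. \eqref{liu55-01}), the bounds $\|\mathbb{K}_{11}\|\le C\|\nabla\ln\sigma\|$ and $\|\mathbb{K}_{12}\|\le C_{\epsilon_0}\|\nabla\ln\sigma\|$ (cf. \eqref{liu22-04}, \eqref{liu55-03}), and the classical invertibility of $\mathbb{I}-\mathbb{K}_{22}^{\partial\Omega\to\partial\Omega}$ on $C(\partial\Omega)$ from the interior Dirichlet problem for the Laplacian. The organization, however, is genuinely different. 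The paper proves compactness of the full matrix operator $\mathbb{K}_\sigma$, invokes the Fredholm alternative so that only uniqueness of the homogeneous system \eqref{liu22-03} must be shown, and then establishes uniqueness by a Schur-type elimination: first $\mathbb{I}-\mathbb{K}_{11}$ is invertible by contraction under \eqref{liu22-09}, then $\tilde\mu$ is eliminated via \eqref{liu44-08}, and the resulting fixed-point equation \eqref{liu44-01} for $\tilde\psi$ has a contractive operator $(\mathbb{I}-\mathbb{K}_{22})^{-1}\mathbb{K}_{21}(\mathbb{I}-\mathbb{K}_{11})^{-1}\mathbb{K}_{12}$. You instead split $\mathbb{I}-\mathbb{K}_\sigma=\mathbb{A}-\mathbb{B}$ with $\mathbb{A}$ block lower-triangular (the ``constant-coefficient'' part, invertible through $(\mathbb{I}-\mathbb{K}_{22})^{-1}$) and $\mathbb{B}$ carrying all the $\nabla\ln\sigma$-weighted blocks, and conclude by a Neumann series since $\|\mathbb{A}^{-1}\mathbb{B}\|<1$ under \eqref{liu22-09}. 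This buys you existence and uniqueness in one stroke with an explicit bound on $(\mathbb{I}-\mathbb{K}_\sigma)^{-1}$, and it renders your compactness step (and the Fredholm alternative) logically superfluous; the paper's route, by contrast, isolates where smallness is really used (twice, in two contraction arguments) and keeps the Fredholm framework that would be needed if one hoped to drop \eqref{liu22-09}, as is done later in Theorem 2.3 in the $L^2$ setting. One caveat you correctly flag, and which is equally present in the paper: the constant in the bound for $\mathbb{K}_{12}^{\partial\Omega\to\Omega_{\epsilon_0}}$ degenerates as $\epsilon_0\to 0$, so the admissible smallness threshold in \eqref{liu22-09} implicitly depends on $\epsilon_0$; your final paragraph on verifying the PDE in $\Omega_{\epsilon_0}$ and the boundary condition via the jump relations matches the paper's appeal to \eqref{liu22-06}.
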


In case of $\nabla\sigma|_{\Omega}\equiv 0$ with $F=0$, i.e., $\sigma$ is a constant in $\Omega$, we can solve the direct problem using single layer potential directly, i.e., we can specify $\mu(x)\equiv 0$ in \eqref{liu22-05}.

\begin{proof}
We prove this result by using the Fredholm Alternative and the contractive mapping recursively.


We rewrite \eqref{liu22-07} equivalently in the vector form
\begin{eqnarray}\label{liu22-07-01}
(\mathbb{I}-\mathbb{K}_\sigma)
\left[
\begin{array}{c}
\tilde\mu\\
\tilde\psi
\end{array}
\right]\left(
\begin{array}{c}
z\\
x
\end{array}
\right)=
\left(
\begin{array}{c}
F(z)\sigma^{-1}(z)\\
-2f(x)
\end{array}
\right), \quad (z,x)\in \Omega_{\epsilon_0}\times\partial\Omega
\end{eqnarray}
with the matrix operator
\begin{eqnarray*}
\mathbb{K}_\sigma:=\left(
\begin{array}{cc}
\mathbb{K}_{11}^{\Omega_{\epsilon_0}\to\Omega_{\epsilon_0}} &-\mathbb{K}_{12}^{\partial\Omega\to\Omega_{\epsilon_0}}\\
-\mathbb{K}_{21}^{\Omega_{\epsilon_0}\to\partial\Omega} &\mathbb{K}_{22}^{\partial\Omega\to\partial\Omega}
\end{array}
\right).
\end{eqnarray*}
By the expressions of four elements of the matrix $\mathbb{K}_\sigma$,  $\mathbb{K}_\sigma$ is compact from $C(\overline\Omega_{\epsilon_0})\times C(\partial\Omega)$ to itself. Notice, both $\mathbb{K}_{21}^{\Omega_{\epsilon_0}\to\partial\Omega}$ and  $\mathbb{K}_{12}^{\partial\Omega\to\Omega_{\epsilon_0}}$ are of continuous kernel due to $\overline\Omega_{\epsilon_0}\subset\subset\Omega$. Therefore, to prove Theorem 2.1, it is enough to prove the uniqueness of the solution to \eqref{liu22-07-01} by the Fredholm Alternative for the linear compact operator equation of the second kind.

The uniqueness of solution to \eqref{liu22-07-01} (equivalently to \eqref{liu22-07}) is established by several steps. Assume that $(\tilde\mu,\tilde\psi)\in C(\overline\Omega_{\epsilon_0})\times C(\partial\Omega)$ meets
\begin{eqnarray}\label{liu22-03}
\begin{cases}
(\mathbb{I}-\mathbb{K}_{11}^{\Omega_{\epsilon_0}\to\Omega_{\epsilon_0}})[\tilde\mu](z)+
\mathbb{K}_{12}^{\partial\Omega\to\Omega_{\epsilon_0}}[\tilde\psi](z)=0, &z\in \Omega_{\epsilon_0},\\
\mathbb{K}_{21}^{\Omega_{\epsilon_0}\to\partial\Omega}[\tilde\mu](x)+(\mathbb{I}-
\mathbb{K}_{22}^{\partial\Omega\to\partial\Omega})[\tilde\psi](x)=0, &x\in\partial\Omega.
\end{cases}
\end{eqnarray}
We need to prove $(\tilde\mu,\tilde\psi)=(0,0)$.

{\bf Step 1:} We prove that the equation
$$(\mathbb{I}-\mathbb{K}_{11}^{\Omega_{\epsilon_0}\to\Omega_{\epsilon_0}})[\tilde\mu](z)=g(z), \quad z\in\Omega_{\epsilon_0}$$
is uniquely solvable for any $g\in C(\Omega_{\epsilon_0})$.

Since $\mathbb{K}_{11}^{\Omega_{\epsilon_0}\to\Omega_{\epsilon_0}}$ is compact from $C(\overline\Omega_{\epsilon_0})$ to itself, it is enough to prove
$$(\mathbb{I}-\mathbb{K}_{11}^{\Omega_{\epsilon_0}\to\Omega_{\epsilon_0}})[\tilde\mu](z)=0$$
has only the trivial solution by the Fredholm Alternative. To this end, let us prove that the map $\mathbb{K}_{11}^{\Omega_{\epsilon_0}\to\Omega_{\epsilon_0}}$ is contractive. By the expression of $\mathbb{K}_{11}^{\Omega_{\epsilon_0}\to\Omega_{\epsilon_0}}$, we know
\begin{equation}\label{liu22-04}
\|\mathbb{K}_{11}^{\Omega_{\epsilon_0}\to\Omega_{\epsilon_0}}
[\tilde\mu]\|_{C(\overline\Omega_{\epsilon_0})}\le
\frac{\norm{\nabla\ln\sigma}_{C(\overline\Omega)}}{2\pi}
\norm{\int_{\Omega_{\epsilon_0}}\left|\nabla_y\ln|y-\cdot|\;\right|dy}_{C(\overline
\Omega_{\epsilon_0})}
\norm{\tilde\mu}_{C(\overline\Omega_{\epsilon_0})}.
\end{equation}
Since $\Omega$ is bounded and $\Omega_{\epsilon_0}\subset\subset\Omega$, $\mathbb{K}_{11}^{\Omega_{\epsilon_0}\to\Omega_{\epsilon_0}}$ is contractive from \eqref{liu22-04} and \eqref{liu22-09}.

{\bf Step 2:} Derive a linear equation for $\tilde \psi$ in fixed point form by \eqref{liu22-03}.

By {\bf Step 1}, we can solve $\tilde\mu$ explicitly from the first equation of \eqref{liu22-03}, that is,
\begin{equation}\label{liu44-08}
\tilde\mu(y)=-(\mathbb{I}-\mathbb{K}_{11}^{\Omega_{\epsilon_0}\to\Omega_{\epsilon_0}})^{-1}
\mathbb{K}_{12}^{\partial\Omega\to\Omega_{\epsilon_0}}[\tilde\psi](y), \quad y\in \Omega_{\epsilon_0}.
\end{equation}
 Inserting this representation into the second equation of \eqref{liu22-03}, we have
\begin{equation}\label{liu33-08}
(\mathbb{I}-
\mathbb{K}_{22}^{\partial\Omega\to\partial\Omega})[\tilde\psi](x)-
\mathbb{K}_{21}^{\Omega_{\epsilon_0}\to\partial\Omega}
(\mathbb{I}-\mathbb{K}_{11}^{\Omega_{\epsilon_0}\to\Omega_{\epsilon_0}})^{-1}
\mathbb{K}_{12}^{\partial\Omega\to\Omega_{\epsilon_0}}[\tilde\psi](x)=0,\quad x\in\partial\Omega.
\end{equation}
Since $\mathbb{K}_{22}^{\partial\Omega\to\partial\Omega}$
is double-layer potential with respect to the continuous density functions
$\tilde\psi(y)$,
it is compact from $C(\partial\Omega)$ to itself (Theorem 2.30 and Theorem 2.31 in \cite{Colton}). So \eqref{liu33-08} leads to
\begin{equation}\label{liu33-09}
(\mathbb{I}-
\mathbb{K}_{22}^{\partial\Omega\to\partial\Omega})[\tilde\psi](x)-
\mathbb{K}^{\partial\Omega\to\partial\Omega}[\tilde\psi](x)=0, \quad x\in\partial\Omega,
\end{equation}
with the operator
$$
\mathbb{K}^{\partial\Omega\to\partial\Omega}[\tilde\psi](x):=
\mathbb{K}_{21}^{\Omega_{\epsilon_0}\to\partial\Omega}
(\mathbb{I}-\mathbb{K}_{11}^{\Omega_{\epsilon_0}\to\Omega_{\epsilon_0}})^{-1}
\mathbb{K}_{12}^{\partial\Omega\to\Omega_{\epsilon_0}}[\tilde\psi](x),\quad x\in\partial\Omega.
$$

Notice that the operator $\mathbb{K}_{22}^{\partial\Omega\to\partial\Omega}$ is the double-layer potential for the Laplace operator, the unique solvability of the interior Dirichlet problem for the Laplace equation together with the double-layer potential representation of the solution ensures $\mathbb{I}-
\mathbb{K}_{22}^{\partial\Omega\to\partial\Omega}$ is invertible from $C(\partial\Omega)$ to itself. So  the equation \eqref{liu33-09} has the
fixed-point form
\begin{equation}\label{liu44-01}
\tilde\psi(x)-
(\mathbb{I}-
\mathbb{K}_{22}^{\partial\Omega\to\partial\Omega})^{-1}
\mathbb{K}^{\partial\Omega\to\partial\Omega}[\tilde\psi](x)=0, \quad x\in\partial\Omega.
\end{equation}

{\bf Step 3:} We prove that the equation \eqref{liu22-03} has only the trivial solution.

It is obvious that
\begin{equation}\label{liu55-01}
\norm{\mathbb{K}_{21}^{\Omega_{\epsilon_0}\to\partial\Omega}}
_{C(\overline\Omega_{\epsilon_0})\to C(\partial\Omega)}\le C
\end{equation}
due to $\Omega_{\epsilon_0}\subset\subset\Omega$.
As for the operator $\mathbb{K}_{12}^{\partial\Omega\to\Omega_{\epsilon_0}}$,
the straightforward computations yield
\begin{equation}\label{liu55-02}
\mathbb{K}_{12}^{\partial\Omega\to\Omega_{\epsilon_0}}[\tilde\psi](x)\equiv
\nabla_x\left(\mathbb{K}_{12,0}^{\partial\Omega\to\Omega_{\epsilon_0}}[\tilde\psi](x)\right)\cdot \nabla\ln\sigma(x),
\end{equation}
where
$$\mathbb{K}_{12,0}^{\partial\Omega\to\Omega_{\epsilon_0}}[\tilde\psi](x):=-
\int_{\partial\Omega}\frac{\partial\Phi(x,y)}{\partial\nu(y)}
\tilde\psi(y)ds(y),\quad x\in \Omega_{\epsilon_0},
$$
which is
the double-layer potential with density function $\tilde\psi$. Since both kernels are smooth, we have from \eqref{liu55-02} that
\begin{eqnarray*}
\norm{\mathbb{K}_{12}^{\partial\Omega\to\Omega_{\epsilon_0}}[\tilde\psi]}_{C(\Omega_{\epsilon_0})}
&=&
\norm{\nabla_x(\mathbb{K}_{12,0}^{\partial\Omega\to\Omega_{\epsilon_0}}[\tilde\psi])\cdot
\nabla\ln\sigma}_{C(\overline\Omega_{\epsilon_0})}
\nonumber\\&\le&
C_{\epsilon_0}\|\tilde\psi\|_{C(\partial\Omega)}
\norm{\nabla\ln\sigma}_{C(\overline\Omega_{\epsilon_0})}
\le
C_{\epsilon_0}
\norm{\nabla\ln\sigma}_{C(\overline\Omega)}\|\tilde\psi\|_{C(\partial\Omega)},
\end{eqnarray*}
which yields
\begin{equation}\label{liu55-03}
\norm{\mathbb{K}_{12}^{\partial\Omega\to\Omega_{\epsilon_0}}}_{C(\partial\Omega)\to C(\overline\Omega_{\epsilon_0})}\le C_{\epsilon_0}\norm{\nabla\ln\sigma}_{C(\overline\Omega)}.
\end{equation}

Since both $(\mathbb{I}-
\mathbb{K}_{22}^{\partial\Omega\to\partial\Omega})^{-1}$ and $(\mathbb{I}-\mathbb{K}_{11}^{\Omega\to\Omega})^{-1}$ are bounded, $(\mathbb{I}-
\mathbb{K}_{22}^{\partial\Omega\to\partial\Omega})^{-1}
\mathbb{K}^{\partial\Omega\to\partial\Omega}$ is contractive under \eqref{liu22-04}, by combining \eqref{liu44-01},
\eqref{liu55-01}  and \eqref{liu55-03} together. So we have $\tilde\psi(x)=0$
from \eqref{liu44-01}. Finally we obtain $\tilde\mu(x)=0$
from \eqref{liu44-08}.

The assertion that $u(x)$ meets PDE in $\Omega_{\epsilon_0}$ comes from \eqref{liu22-06}.
The proof is complete.
\end{proof}

Since $\Omega_{\epsilon_0}$ can approximate $\Omega$ up to any accuracy by taking $\epsilon_0>0$ small enough, we finally get the representation of $u(x)$ in $\Omega$.

\begin{rem}
The importance of this result is that we construct an implementable
Levi-function-based scheme for solving the solution $u(x)$ to \eqref{liu11-01} in any interior domain $\Omega_{\epsilon_0}$ in terms of \eqref{liu22-05}, by proving the solvability of the density pair $(\tilde\mu|_{\Omega_{\epsilon_0}},\tilde\psi|_{\partial\Omega})$ from \eqref{liu22-07} rigorously. The restriction that $u$ satisfies the PDE in $\Omega_{\epsilon_0}$, namely, the requirement that, the first equation of \eqref{liu22-07} hold only in $\Omega_{\epsilon_0}$ instead of $\Omega$, enables us to keep the compact property of the operator $\mathbb{K}_{12}^{\partial\Omega\to\Omega_{\epsilon_0}}$
from its $C^1$-smoothness
for $\tilde\psi\in C(\partial\Omega)$,
while the second equation of \eqref{liu22-07} is a linear integral equation of the second kind.
However, for general $\sigma\in C^1(\overline\Omega)$, if we consider the operators $\mathbb{K}_{12}^{\partial\Omega\to\Omega}$, which is not compact for $\tilde\psi\in C(\partial\Omega)$,  the higher regularity $\tilde\psi\in C^{1,\alpha}(\partial\Omega)$ with $\alpha\in (0,1)$ should be imposed (Theorem 2.23, \cite{Colton}).
\end{rem}

Theorem 2.1 establishes the solvability of \eqref{liu22-07-01} in continuous function space for $\sigma$ with slight restrictions, which ensures the reasonability of the Levi function representation
\eqref{liu22-05} for solving \eqref{liu11-01}, by firstly restricting the solution representation in $\Omega_{\epsilon_0}$ and then taking $\epsilon_0>0$ arbitrary small. However, it is well-known that \eqref{liu11-01} is solvable for all $0<\sigma\in L^\infty(\Omega)$. So, an interesting problem is to relax the restriction on $\sigma$ in Theorem 2.1, removing the {\it a-priori} smallness assumption on $\|\nabla\ln\sigma\|_{C(\overline\Omega)}$.  We state this result in $L^2$ space as follows.

For any $(F,f)\in L^2(\Omega)\times L^2(\partial\Omega)$, define the density pair
$(\mu,\psi)\in L^2(\Omega)\times L^2(\partial\Omega)$ from the solution to
\begin{eqnarray}\label{liu22-06-0009}
\begin{cases}
\mu(x)+\int_{\Omega}\mu(y)R(x,y)dy+\int_{\partial\Omega}\frac{\psi(y)}{\sigma(y)}\partial_{\nu(y)} \sigma(y)R(x,y)ds(y)=F(x), &x\in\Omega\\
\int_{\Omega}\mu(y)P(x,y)dy+\int_{\partial\Omega}\psi(y)\partial_{\nu(y)}P(x,y)ds(y)-
\frac{1}{2}\frac{\psi(x)}{\sigma(x)}
=f(x), &x\in\partial\Omega.
\end{cases}
\end{eqnarray}

\begin{them}
There exists a unique solution $(\mu,\psi)\in L^2(\Omega)\times L^2(\partial\Omega)$ to \eqref{liu22-06-0009}. Moreover, the function
\begin{equation}\label{liu44-04-01}
u(x):=\int_\Omega\mu(y)P(x,y)dy+\int_{\partial\Omega}\frac{\psi(y)}{\sigma(y)}\partial_{\nu(y)}\Phi(x,y)ds(y), \quad x\in \mathbb{R}^2
\end{equation}
solves the direct problem \eqref{liu11-01}.
\end{them}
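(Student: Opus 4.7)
The plan is to cast \eqref{liu22-06-0009} as a Fredholm equation of the second kind on $L^2(\Omega)\times L^2(\partial\Omega)$, reduce existence to uniqueness by the Fredholm alternative, and prove uniqueness by identifying the Levi potential \eqref{liu44-04-01} with the (homogeneous) solution of \eqref{liu11-01} and invoking interior together with exterior uniqueness.

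First I would rewrite \eqref{liu22-06-0009} in matrix form $(\mathbb{A}+\mathbb{K})(\mu,\psi)^\top=(F,f)^\top$, with $\mathbb{A}=\mathrm{diag}(\mathbb{I},-\tfrac{1}{2}\sigma^{-1}\mathbb{I})$, which is an isomorphism of $L^2(\Omega)\times L^2(\partial\Omega)$ onto itself thanks to $0<\sigma_0\le\sigma\in L^\infty(\Omega)$, and $\mathbb{K}$ the remaining matrix of integral operators. Each block of $\mathbb{K}$ must be shown to be compact on the relevant $L^2$ space: the domain-to-domain block has the weakly singular Riesz-type kernel $|R(x,y)|\lesssim |x-y|^{-(d-1)}$ and is therefore compact on $L^2(\Omega)$; the boundary-to-boundary block is the classical Laplace double-layer potential, compact on $L^2(\partial\Omega)$ for smooth $\partial\Omega$; the domain-to-boundary block has kernel $P(x,y)$ lying in $L^2(\Omega\times\partial\Omega)$, hence is Hilbert--Schmidt. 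The most delicate is the boundary-to-domain block: using $\sigma(y)R(x,y)=-\nabla\sigma(x)\cdot\nabla_x\Phi(x,y)$ I would rewrite it as multiplication by $\nabla\sigma(x)$ composed with the gradient of a classical double-layer potential, so that standard $H^1_{\mathrm{loc}}$-mapping properties of the double layer yield compactness into $L^2(\Omega)$. Consequently $\mathbb{A}^{-1}\mathbb{K}$ is compact and the Fredholm alternative applies.

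For uniqueness, suppose $(\mu,\psi)$ solves \eqref{liu22-06-0009} with $F\equiv 0$ and $f\equiv 0$, and define $u$ by \eqref{liu44-04-01}. Applying $\mathcal{L}_x$ under the integral sign, using $\mathcal{L}_xP(x,y)=\delta(x-y)+R(x,y)$ and the harmonicity of $\partial_{\nu(y)}\Phi(\cdot,y)$ away from $y$, the first homogeneous equation expresses $\mathcal{L}_x u=0$ distributionally in $\Omega$, while the interior trace formula for the double layer combined with the second homogeneous equation gives $u|_{\partial\Omega}=0$. The $H^1$-well-posedness of \eqref{liu11-01} cited from Evans then forces $u\equiv 0$ in $\Omega$. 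To conclude $\psi=0$, I extend $u$ by the same formula \eqref{liu44-04-01} to $x\in\mathbb{R}^d\setminus\overline{\Omega}$; the extension is harmonic in the exterior with the canonical decay at infinity ($O(|x|^{-1})$ for $d=3$, the $d=2$ case being handled via the total-charge condition on $\mu,\psi$). Using the $C^1$-regularity of the volume potential across $\partial\Omega$ together with the continuity of the normal derivative of the double layer gives $\partial_\nu u^+|_{\partial\Omega}=\partial_\nu u^-|_{\partial\Omega}=0$; exterior Neumann uniqueness for $-\Delta$ with the prescribed decay then forces $u^+\equiv 0$ in the exterior, and the double-layer jump $u^+|_{\partial\Omega}-u^-|_{\partial\Omega}=\psi/\sigma=0$ yields $\psi\equiv 0$. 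Substituting back and using $-\Delta_x P(x,y)=\sigma(y)^{-1}\delta(x-y)$ shows that $-\Delta u=\mu/\sigma$ in $\Omega$; since $u\equiv 0$, this forces $\mu\equiv 0$.

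Existence of $(\mu,\psi)\in L^2(\Omega)\times L^2(\partial\Omega)$ for any data now follows from the Fredholm alternative, and reversing the computations above shows that the associated $u$ in \eqref{liu44-04-01} solves \eqref{liu11-01}. The main obstacle is the uniqueness step: passing from $u\equiv 0$ in $\Omega$ to $\psi=0$ requires the exterior potential-theoretic argument outlined above, and in two dimensions one has to handle the logarithmic growth at infinity carefully so that the exterior uniqueness result is available. A secondary technical point is the compactness of the boundary-to-domain block, whose formal kernel $\partial_{\nu(y)}(\sigma R)$ behaves like $|x-y|^{-d}$ as $x$ approaches $\partial\Omega$ and therefore must be interpreted through the gradient-of-double-layer rewriting to obtain a well-defined compact operator on $L^2$.
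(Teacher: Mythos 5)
Your proposal is correct and follows the same overall strategy as the paper: recast \eqref{liu22-06-0009} as a system of the second kind, reduce existence to uniqueness via the Fredholm alternative, and prove uniqueness by showing that the potential \eqref{liu44-04-01} built from homogeneous densities vanishes both inside and outside $\Omega$, then reading off $\psi$ and $\mu$ from the jump relations. Two steps differ in detail, and in both your route is at least as clean as the paper's. For the exterior step the paper asserts outright that $u$ solves the exterior Dirichlet problem with zero boundary data and zero behavior at infinity; you instead transfer the vanishing of $u$ in $\Omega$ through the continuity of the normal derivative of the double layer and the $C^1$ regularity of the volume potential to obtain zero exterior Neumann data, and invoke exterior Neumann uniqueness — this avoids presupposing that the exterior trace (which equals $\psi/\sigma$) vanishes, which is precisely what is to be proved. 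Your parenthetical on the two-dimensional decay can be made precise rather than assumed: zero exterior Neumann data plus the divergence theorem over a large disc forces $\int_\Omega \mu/\sigma\,dy=0$, which yields the $O(|x|^{-1})$ behavior needed for the exterior uniqueness argument (the paper silently assumes the decay). For $\mu$, you conclude directly from $\psi=0$, $u\equiv 0$ in $\Omega$ and $-\Delta u=\mu/\sigma$; the paper instead introduces the Newtonian potential $w$ of $\mu/\sigma$ and solves the auxiliary problem $-\nabla\cdot(\sigma\nabla w)=0$ with $w|_{\partial\Omega}=0$ — your shortcut is valid and simpler. One caution: the $L^2(\partial\Omega)\to L^2(\Omega)$ compactness of the boundary-to-domain block, which you rightly flag, is not justified by interior $H^1_{\mathrm{loc}}$ mapping properties, since these do not control the $|x-y|^{-d}$ blow-up of the kernel as $x$ approaches $\partial\Omega$; a genuine argument (or a restriction to $\Omega_{\epsilon_0}$ as in Theorem 2.1) is needed here, but this is a gap shared with, not created relative to, the paper's own proof, which simply asserts that $\mathbb{K}_\sigma$ is compact.
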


\begin{proof}
We firstly prove the uniqueness of the solution to \eqref{liu22-06-0009}.
It is easy to verify from  the jump relations of the surface potentials that  $u\in H^2(\Omega)$ given by \eqref{liu44-04-01} with density pair $(\mu,\psi)$ satisfying \eqref{liu22-06-0009} for $F=f=0$ solves
\begin{eqnarray*}
\begin{cases}
-\nabla\cdot(\sigma\nabla u)=0, &x\in\Omega\\
u(x)=0, &x\in\partial\Omega,
\end{cases}
\end{eqnarray*}
which yields
\begin{eqnarray}\label{liu44-02-01}
u(x)=\int_\Omega\Phi(x,y)\frac{\mu(y)}{\sigma(y)}dy+
\int_{\partial\Omega}\frac{\partial\Phi(x,y)}{\partial\nu(y)}\frac{\psi(y)}{\sigma(y)}ds(y)
\equiv 0 \quad\quad
\end{eqnarray}
for $x\in\Omega$. However, $u(x)$ representing by \eqref{liu44-04-01} also satisfies the exterior problem
\begin{eqnarray*}
\begin{cases}
\Delta u=0, &x\in\mathbb{R}^2\setminus\overline\Omega\\
u(x)=0, &x\in\partial\Omega
\end{cases}
\end{eqnarray*}
with zero asymptotic behavior as $|x|\to\infty$.
Consequently we also have
\begin{eqnarray}\label{liu44-03-01}
u(x)=\int_\Omega\Phi(x,y)\frac{\mu(y)}{\sigma(y)}dy+
\int_{\partial\Omega}\frac{\partial\Phi(x,y)}{\partial\nu(y)}\frac{\psi(y)}{\sigma(y)}ds(y)
\equiv 0 \quad\quad
\end{eqnarray}
for $x\in\mathbb{R}^2\setminus\overline\Omega$. Taking $x\to\partial\Omega$ from $\Omega$ and $\mathbb{R}^2\setminus\overline\Omega$ in \eqref{liu44-02-01} and \eqref{liu44-03-01} respectively, the continuity of single layer potential and the volume potential on $\partial\Omega$ together with the jump relation of double layer potential on $\partial\Omega$ yields
\begin{eqnarray*}
\int_\Omega\Phi(x,y)\frac{\mu(y)}{\sigma(y)}dy+
\int_{\partial\Omega}\frac{\partial\Phi(x,y)}{\partial\nu(y)}\frac{\psi(y)}{\sigma(y)}ds(y)
\pm\frac{1}{2}\frac{\psi(x)}{\sigma(x)}
\equiv 0, \quad x\in\partial\Omega,
\end{eqnarray*}
which generate $\frac{\psi(x)}{\sigma(x)}\equiv 0$ in $\partial\Omega$ by subtracting these two identities.
Then \eqref{liu22-06-0009} for $F=f=0$ leads to
\begin{eqnarray}
(\mathbb{I}-\mathbb{K}_{11}^{\Omega\to\Omega})[\frac{\mu}{\sigma}](x)=0, \quad x\in\Omega\\
\mathbb{K}_{21}^{\Omega\to\partial\Omega}[\frac{\mu}{\sigma}](x)=0, \quad x\in\partial\Omega,
\end{eqnarray}
where $(\mathbb{K}_{11}^{\Omega\to\Omega}, \mathbb{K}_{21}^{\Omega\to\partial\Omega})$ is $(\mathbb{K}_{11}^{\Omega_{\epsilon_0}\to\Omega_{\epsilon_0}}, \mathbb{K}_{21}^{\Omega_{\epsilon_0}\to\partial\Omega})$ with $\Omega_{\epsilon_0}$ replaced by $\Omega$.

Define
$w(x)=\int_\Omega\Phi(x,y)\frac{\mu(y)}{\sigma(y)}dy,\;x\in\overline\Omega$
and noticing $-\Delta w=\frac{\mu(x)}{\sigma(x)}$ in $\Omega$,
then the above relations yield
\begin{eqnarray}
-\frac{1}{\sigma(x)}\nabla\cdot(\sigma(x)\nabla w(x))\equiv-\Delta w(x)-\nabla w(x)\cdot\nabla\ln\sigma(x)=0, \quad x\in\Omega\\
w(x)=0, \quad x\in\partial\Omega,
\end{eqnarray}
which leads to $w(x)=0$ in $\Omega$ and then $\mu(x)=0$ in $\Omega$.

Noticing the fact that $\mathbb{K}_{11}^{\Omega\to\Omega}$ is also compact from $L^2(\Omega)$ to $L^2(\Omega)$ (Theorem 8.2 in \cite{Colton1}), \eqref{liu22-06-0009} is a linear operator system of the second kind with compact operator $\mathbb{K}_\sigma$, so the Fredholm Alternative leads to the existence of the solution to \eqref{liu22-06-0009} for any $(F,f)\in L^2(\Omega)\times L^2(\partial\Omega)$.
The proof is complete.
\end{proof}

For known $\sigma(x)$, we can firstly solve the density function $(\tilde\mu,\tilde\psi)$ from the linear integral system \eqref{liu22-07} which is well-posed in terms of Theorem 2.1 and Theorem 2.3, and then determine the solution $u$ to \eqref{liu11-01}
using the representation \eqref{liu22-05}. Since the operators in \eqref{liu22-07} are defined by integrals with singular kernels and also the domain integral, we need to establish the discrete version \eqref{liu22-07}, dealing with the singularities and constructing an efficient scheme for computing the integrals with smooth integrands.

To evaluate the volume potentials numerically, we propose two schemes in the following two sections to handle domain integral in \eqref{liu22-07}. One technique (ADS) is a modification of direct parameterization proposed in \cite{Beshley} for the linear system via adding a self-adaptive partition scheme with explicit quantitative criterion to discrete domain $\Omega$. The other one is based on dual reciprocity method (DRM) \cite{DRM_book} which transforms the domain integrals into equivalent boundary integrals without domain discretization.

\section{Self-adaptive discretization for the volume potential}

The standard technique for dealing with the volume potential in the linear system \eqref{liu22-06-0009} is the domain parameterization which transforms the domain $\Omega$ into a standard rectangle domain $\Pi$, as proposed in \cite{Beshley}. Rather than the nonuniform domain partition of $\Omega$ derived by uniform domain discretization for $\Pi$ in \cite{Beshley}, here we propose a self-adaptive partition scheme (ADS) with explicit quantitative criterion to discrete domain $\Pi$, which ensures the integral in $\Omega$ can be computed numerically with uniform
accuracy. Here we only give the scheme for $\Omega\subset \mathbb{R}^2$, the setting for $\Omega\subset \mathbb{R}^3$ is analogous, see numerics in section 5.

Assume that the domain $\Omega$ is star-like with  the boundary curve $\partial\Omega$ and some center point $P_0\in\Omega$. Then we can represent $\partial\Omega\in \mathbb{R}^2$ by
$$\partial\Omega:=\{x(t)=(x_{1}(t),x_{2}(t))=r(t)(\cos t,\sin t),\; t\in [0,2\pi]\}+P_0,$$
with $2\pi-$periodic radius function $r(t)>0$.
Now we introduce a one-to-one map between  $\Pi:=[0,1]\times [0,2\pi]$ and $\overline\Omega$ by
$$\tilde p(\eta,t):=p(\eta,t)+P_{0}:=(\eta x_{1}(t),\eta x_{2}(t))+P_{0}\in\overline\Omega$$
in the zero measure sense for the coordinate $(\eta,t)\in\Pi$, i.e.,
$$\tilde p|_{\{(\eta,t):(0,1)\times[0,2\pi]\}}=
\Omega\setminus\{P_0\},\quad
\tilde p|_{\{(0,t):\;t\in [0,2\pi]\}}=P_0,\quad \tilde p|_{\{(1,t):\;t\in [0,2\pi]\}}=\partial\Omega.$$

When establishing the solvability of the density pair $(\mu,\psi)$ in continuous function space, we need to restrict both the volume potential and the solution in arbitrarily specified interior domain $\Omega_{\epsilon_0}$ by considering \eqref{liu22-07}. However, the hyper-singularity of the operator $\mathbb{K}_{12}^{\partial\Omega\to\Omega}[\tilde\psi](x)$ for $x\in\Omega$ in discrete version can be removed numerically by taking $x\in\Omega$ in the kernel function with fixed positive distance to $\partial\Omega$. Consequently, instead of \eqref{liu22-07}, we can still consider the system \eqref{liu22-06-0009} directly for the numerical realizations, with the parameterized version
\begin{eqnarray}\label{liu44-01-000}
\begin{cases}
(\mathbb{I}-\mathbb{K}_{11}^{\Omega\to\Omega})[\tilde\mu](\tilde p(\eta,t))+
\mathbb{K}_{12}^{\partial\Omega\to\Omega}[\tilde\psi](\tilde p(\eta,t))=\frac{F(\tilde p(\eta,t))}{\sigma(\tilde p(\eta,t))}, &(\eta,t) \in \Pi \\
\mathbb{K}_{21}^{\Omega\to\partial\Omega}[\tilde\mu](\tilde p(1,t))+(\mathbb{I}-
\mathbb{K}_{22}^{\partial\Omega\to\partial\Omega})[\tilde\psi](\tilde p(1,t))=-2f(\tilde p(1,t)),&t\in[0,2\pi],
\end{cases}
\end{eqnarray}
where the operators have the parametrized representations
$$
\begin{cases}
\mathbb{K}_{11}^{\Omega\to\Omega}[\tilde\mu](\tilde p(\eta,t))&:=\frac{1}{2\pi}\int_{0}^{1}\int_{0}^{2\pi}K_{11}(\eta,t;\xi,\tau)\tilde\mu(\tilde p(\xi,\tau))d\tau d\xi, \qquad (\eta,t)\in \Pi\\
\mathbb{K}_{12}^{\partial\Omega\to\Omega}[\tilde\psi](\tilde p(\eta,t))&:=
\frac{1}{2\pi}\int_{0}^{2\pi}K_{12}(\eta,t;\tau)\tilde\psi(\tilde p(1,\tau))d\tau, \qquad \qquad\quad(\eta,t)\in\Pi\\
\mathbb{K}_{21}^{\Omega\to\partial\Omega}[\tilde\mu](\tilde p(1,t))&:=
\frac{1}{2\pi}\int_{0}^{1}\int_{0}^{2\pi}K_{21}(t;\xi,\tau)
\tilde\mu(\tilde p(\xi ,\tau))d\tau d\xi, \qquad \quad t\in [0,2\pi]\\
\mathbb{K}_{22}^{\partial\Omega\to\partial\Omega}[\tilde\psi](\tilde p(1,t))&:=
\frac{1}{2\pi}\int_{0}^{2\pi}K_{22}(t;\tau)\tilde\psi(\tilde p(1,\tau))d\tau, \qquad \qquad\qquad\;  t \in[0,2\pi]\\
\end{cases}
$$
with the kernels
$$
\begin{cases}
K_{11}(\eta,t;\xi,\tau)&:=\frac{(\tilde p(\xi,\tau)-\tilde p(\eta,t))\cdot\nabla\ln\sigma(\tilde p(\eta,t))}
{|\tilde p(\eta,t)-\tilde p(\xi,\tau)|^2}J(\xi,\tau), \\
K_{12}(\eta,t;\tau)&:=
\frac{\partial}{\partial\nu(\tau)}\left(\frac{(\tilde p(\eta,t)- \tilde p(1,\tau))\cdot\nabla\ln\sigma(\tilde p(\eta,t))}{|\tilde p(\eta,t)- \tilde p(1,\tau)|^2}\right)|x'(\tau)|, \\
K_{21}(t;\xi,\tau)&:=
-\ln\frac{1}{| \tilde p(1,t)-\tilde p(\xi,\tau)|^2}
J(\xi,\tau), \\
K_{22}(t;\tau)&:=
\frac{2\partial}{\partial\nu(\tau)}\left(\ln\frac{1}{|\tilde p(1,t)-\tilde p(1,\tau)|}\right)|x'(\tau)|,
\end{cases}
$$
where
$J(\xi,\tau):=\xi x(\tau)\cdot \nu(\tau)|x'(\tau)|$ is the Jacobian determinant for the coordinated transform $p: \Pi\to\Omega$, with the unit outward  normal direction $\nu(\tau):=\frac{(x_{2}'(\tau),-x_{1}'(\tau))}{|x'(\tau)|}$.





For the surface potential $\mathbb{K}_{22}^{\partial\Omega\to\partial\Omega}$, its
kernel $K_{22}(t,\tau)$ is continuous with the representation
\begin{eqnarray}\label{liu44-18}
K_{22}(t,\tau)=
\begin{cases}
\frac{x''(t)\cdot\nu(t)}{|x'(t))|}, &\quad t=\tau\\
\frac{2(x(t)-x(\tau))\cdot \nu(x(\tau))}{|x(t)-x(\tau)|^2}|x'(\tau)|,&\hbox{elsewhere}.
\end{cases}
\end{eqnarray}



For this domain transformation, it is easy to see that $\eta x(t)-\xi x(\tau)=0$ if and only if $(\eta,t)=(\xi,\tau)$. Therefore, by introducing
$$G_{1}(\eta,t):=\nu(t)\cdot\nabla\ln\sigma(\tilde p(\eta,t)),\quad G_{2}(\eta,t):=\theta(t)\cdot\nabla\ln\sigma(\tilde p(\eta,t))$$
with unit tangential direction $\theta(t):=\frac{(x_1'(t),x_2'(t))}{|x'(t)|}$, the  kernel $K_{11}(\eta,t;\xi,\tau)$ has the decomposition
\begin{eqnarray}\label{liu44-17}
K_{11}(\eta,t;\xi,\tau)\equiv K^{(11)}(\eta,t;\xi,\tau)+\frac{ J(\xi ,\tau)}{\eta}\frac{G_{2}(\eta,t)}{2|x'(t)|}\cot\frac{\tau-t}{2},
\end{eqnarray}
where $K^{(11)}(\eta,t;\xi,\tau)$ is a continuous function with the representation
\begin{eqnarray}
K^{(11)}(\eta,t;\xi,\tau)
=
\begin{cases}
J(\eta,t)\left[G_{1}(\eta,t)  \frac{x''(t)\cdot\nu(t)}{2\eta|x'(t)|^{2}}+G_{2}(\eta,t)
\frac{x''(t)\cdot \theta(t)}
{2\eta|x'(t)|^{2}}\right], \qquad(\eta,t)=(\xi,\tau)\\
J(\xi,\tau)\left[G_{1}(\eta,t)\kappa _{1}(\eta,t;\xi,\tau)+G_{2}(\eta,t)\kappa _{2}(\eta,t;\xi,\tau )\right ], \;\hbox{elsewhere}
\end{cases}
\end{eqnarray}
with
\begin{eqnarray*}
\begin{cases}
\kappa_{1}(\eta,t;\xi,\tau):=\frac{(\xi x(\tau)-\eta x(t))\cdot\nu(t)}{|\eta x(t)-\xi x(\tau)|^{2}},\\
\kappa_{2}(\eta,t;\xi,\tau):=\frac{(\xi x(\tau)-\eta x(t))\cdot\theta(t)}{|\eta x(t)-\xi  x(\tau)|^{2}}-\frac{1}{2\eta|x'(t)|}\cot\frac{\tau-t}{2}.
\end{cases}
\end{eqnarray*}


The weak singular integrals in
\eqref{liu44-17}  with respect to $\tau$ can be computed using the standard formula \cite{Colton1}
\begin{eqnarray}\label{liu44-11}
\begin{cases}
\frac{1}{2\pi}\int_{0}^{2\pi}g(\tau)\cot\frac{\tau-t}{2}d\tau \approx \sum_{j=0}^{2n-1} T_{j}(t;n)g(t_{j}),\\
\frac{1}{2\pi}\int_{0}^{2\pi}g(\tau)\ln\left(4\sin^{2}\frac{\tau-t}{2}\right) d\tau \approx \sum_{j=0}^{2n-1} F_{j}(t;n)g(t_{j}),
\end{cases}
\end{eqnarray}
with the weight functions
\begin{eqnarray}\label{liu-Fj_Tj}
\begin{cases}
F_{j}(t;n)=-\frac{1}{n}\left [ \sum_{m=1}^{n-1} \frac{1}{m}\cos m(t-t_{j})+\frac{1}{2n}\cos n(t-t_{j})\right ],\\
T_{j}(t;n)=-\frac{1}{n}\sum_{m=1}^{n-1} \sin m(t-t_{j})-\frac{1}{2n}\sin n(t-t_{j})
\end{cases}
\end{eqnarray}
for $t\in[0,2\pi)$ and $j=0,\cdots,2n-1$.

Now we consider the volume potential with smooth integrands. The special techniques for computing the volume potentials $\mathbb{K}_{11}^{\Omega\to\Omega}, \mathbb{K}_{21}^{\Omega\to\partial\Omega}$  should be developed, notice the fact that, when we decompose the  integral domain $\Omega$ into the summation of $N$-circle ring $$\Omega_i:=\{\tilde p(\xi,\tau): (\xi,\tau)\in [\xi_{i-1},\xi_i]\times [0,2\pi]\}\subset\Omega$$ with $\xi_i:=\frac{i}{N}$ for $i=1,\cdots, N$ along radius direction, the area of $\Omega_i$ will increase for large $i$, see Figure \ref{liu-Pi2O} for the geometric configuration. So we should introduce some self-adaptive rule to compute the integrals in all $\Omega_i$ for keeping the uniform accuracy of the integral in $\Omega$. That is, the integrals in $\Omega_i$ should be computed by different steps $\tau$ for different $i$.

%

\begin{figure}[htbp]
\centering
\subfigure[domain $\Pi $]
{
\begin{minipage}[t]{0.5\linewidth}
\centering
\includegraphics[scale=0.4]{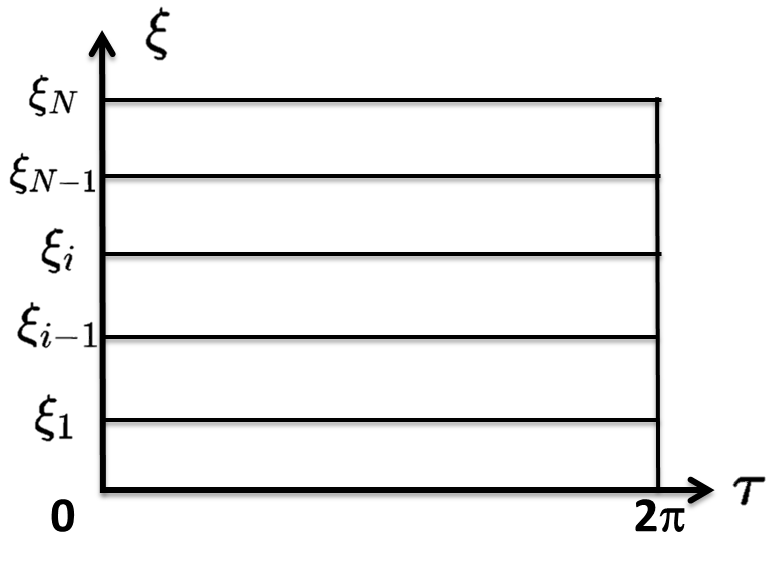}
\end{minipage}
}
\subfigure[domian $\Omega $]
{
\begin{minipage}[t]{0.45\linewidth}
\centering
\includegraphics[scale=0.4]{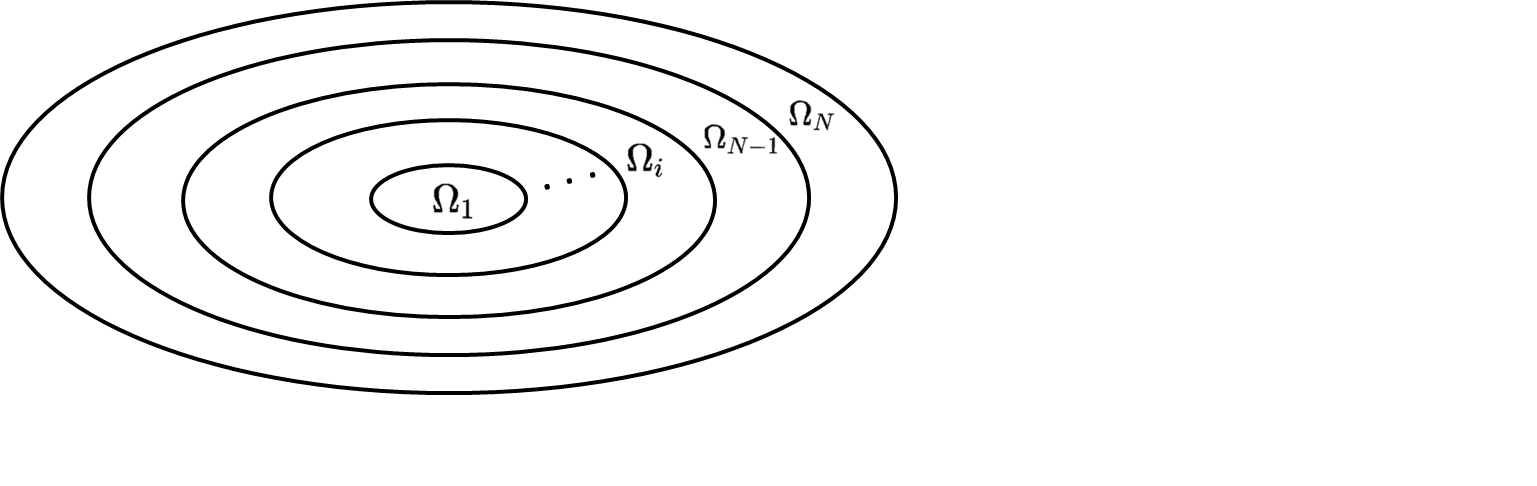}
\end{minipage}%
}%
\centering
\caption{$\Pi_i$ is transformed as $\Omega_i$ by $\Omega_i=\tilde p(\Pi_i)$.}
\label{liu-Pi2O}
\end{figure}

%
%

For the grids $0=\xi_{0}<\xi_1<\cdots<\xi_N=1$, we have the decomposition
$$\overline\Omega\equiv \bigcup_{i=1}^N\overline\Omega_i\equiv \bigcup_{i=1}^N\left\{\tilde p(\xi,\tau):(\xi,\tau)\in\Pi_i:= [\xi_{i-1},\xi_i]\times [0,2\pi]\right\}.$$

\begin{rem}
In the special case of $\Omega$ being an unit cycle, the area of each layer is
\begin{eqnarray}\label{liu44-03-1}
|\Omega_{i}|=(\frac{i}{N})^{2}\pi-(\frac{i-1}{N})^{2}\pi
=\frac{2i-1}{N^{2}}\pi,\quad i=1,\cdots,N,
\end{eqnarray}
which increases linearly with respect to $i$.
\end{rem}

To compute the integral in each $\Omega_i$ with higher accuracy, we introduce the closed curve $\{\tilde p(\xi_{i-1/2},\tau):\tau\in [0,2\pi]\}$  with $\xi_{i-1/2}:=(\xi_{i-1}+\xi_i)/2$ for $i=1,\cdots,N$. Inserting these middle lines
$\xi=\xi_{i-1/2}$ into $\Pi$, the rectangle $\Pi$ is double-partitioned as
$2N$ rectangles with the grids
$$0=\xi_0<\xi_{1/2}<\xi_1<\xi_{3/2}<\xi_2\cdots<\xi_{N-1}<\xi_{N-1/2}<\xi_N=1.$$
Now we apply the middle-rectangle formula in $\Omega_0, \Omega_N$ and the Simpson's formula in $\Omega_i$ for $i=1,\cdots, N-1$ to compute
$$\int_{\Omega_i}s(x)dx=\int_{\Pi_i}s(\tilde p(\xi,\tau))J(\xi,\tau)d\xi d\tau:
=\int_{\xi_{i-1}}^{\xi_i}d\xi\int_0^{2\pi}s_J(\xi,\tau) d\tau$$
with $s_J(\xi,\tau):=s(\tilde p(\xi,\tau))J(\xi,\tau)$.
That is, we have
\begin{eqnarray*}
\int_{\Omega_i}s(x)dx
\approx
\begin{cases}
\int_0^{2\pi}s_J(\xi_{i-1/2},\tau) d\tau\;(\xi_i-\xi_{i-1}), &i=1,N\\
\int_0^{2\pi}\left[s_J(\xi_{i-1},\tau)+4s_J(\xi_{i-1/2},\tau)+s_J(\xi_{i},\tau)\right]
d\tau\;\frac{\xi_i-\xi_{i-1}}{6}, &i=2,\cdots,N-1,
\end{cases}
\end{eqnarray*}
which is rewritten as
\begin{equation}\label{liul11-04-02}
\int_{\Omega_i}s(x)dx\approx \int_0^{2\pi}
\left[\alpha_i^1s_J(\xi_{i-1},\tau)+\alpha_i^2s_J(\xi_{i-1/2},\tau)+
\alpha_i^3s_J(\xi_i,\tau)\right]d\tau
\end{equation}
for $i=1,\cdots, N$ with
\begin{eqnarray}\label{liu11-03-01}
(\alpha_i^1,\alpha_i^2,\alpha_i^3)=
\begin{cases}
(\xi_i-\xi_{i-1})(0,1,0), &i=1,N,\\
\frac{\xi_i-\xi_{i-1}}{6}(1,4,1), &i=2,\cdots, N-1.
\end{cases}
\end{eqnarray}
In terms of \eqref{liul11-04-02}, we need to compute the integrals $\int_0^{2\pi}s_J(\xi_{i},\tau)d\tau, \int_0^{2\pi}s_J(\xi_{i-\frac{1}{2}},\tau)d\tau$ for $i=1,2,\cdots, N$. Define
$\tilde\xi_i:=\xi_{\frac{i}{2}}$ for $i=1,2,\cdots, 2N$, see Figure \ref{liu-Refine}.

\begin{figure}[htbp]
\centering
\subfigure[domain $\Pi $]{
\begin{minipage}[t]{0.35\linewidth}
\centering
\includegraphics[scale=0.4]{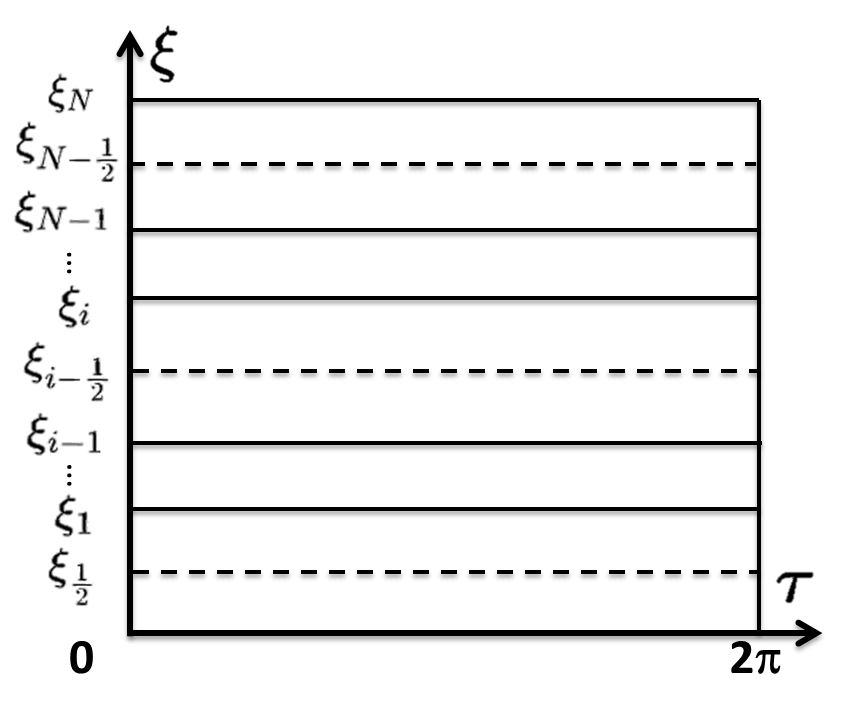}
\end{minipage}%
}%
\subfigure[domian $\xi $]{
\begin{minipage}[t]{0.7\linewidth}
\centering
\includegraphics[scale=0.4]{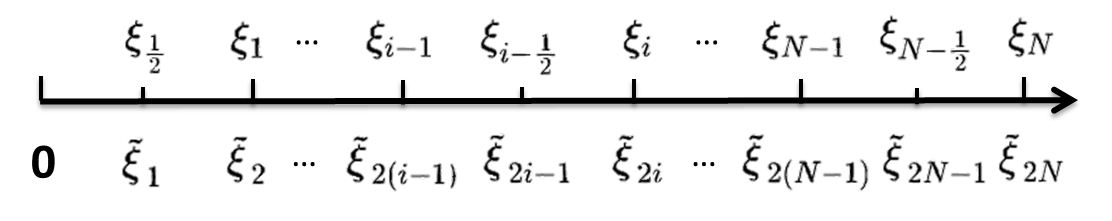}
\end{minipage}%
}%
\centering
\caption{Refinement of the interval $[0,1]$ as $2N$ subintervals and reordering the partition points.}
\label{liu-Refine}
\end{figure}

Divide the segment $\{(\tilde\xi_i,\tau): \tau\in [0,2\pi]\}$  as $2n_{i}$ equi-intervals for $i=1,\cdots,2N$, where we specify
$n_{1}=2^{k_{1}}$ and $n_{i}=n_{1}2^{k_{i}}$ with $k_{i}\in \mathbb{N}^{+}$ for $i=2,\cdots,2N$.
Then, based on the $2n_i+1$ grids on $\{(\tilde\xi_i,\tau):\tau\in [0,2\pi]\}$, we divide the layer $$\tilde\Omega_i:=\{(\xi,\tau)\in [\tilde\xi_{i-1},\tilde\xi_i]\times [0,2\pi],i=1,\cdots,2N\}$$ as the union of several triangles. In terms of the case either $n_i=n_{i-1}$ or $n_i=2n_{i-1}$, there are two different partitions, namely, either the red partition or the black one, as shown in Figure 3.

\begin{figure}[htbp]
\centering
\subfigure[domain $\Pi $]{
\begin{minipage}[t]{0.5\linewidth}
\centering
\includegraphics[scale=0.4]{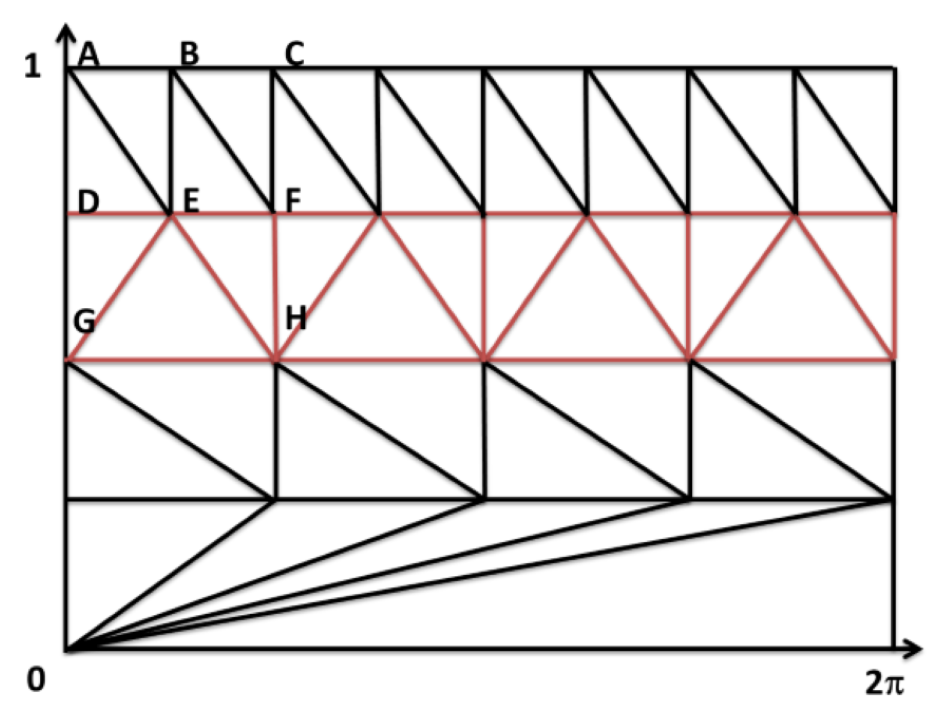}
\end{minipage}%
}%
\subfigure[domian $\Omega $]{
\begin{minipage}[t]{0.5\linewidth}
\centering
\includegraphics[scale=0.4]{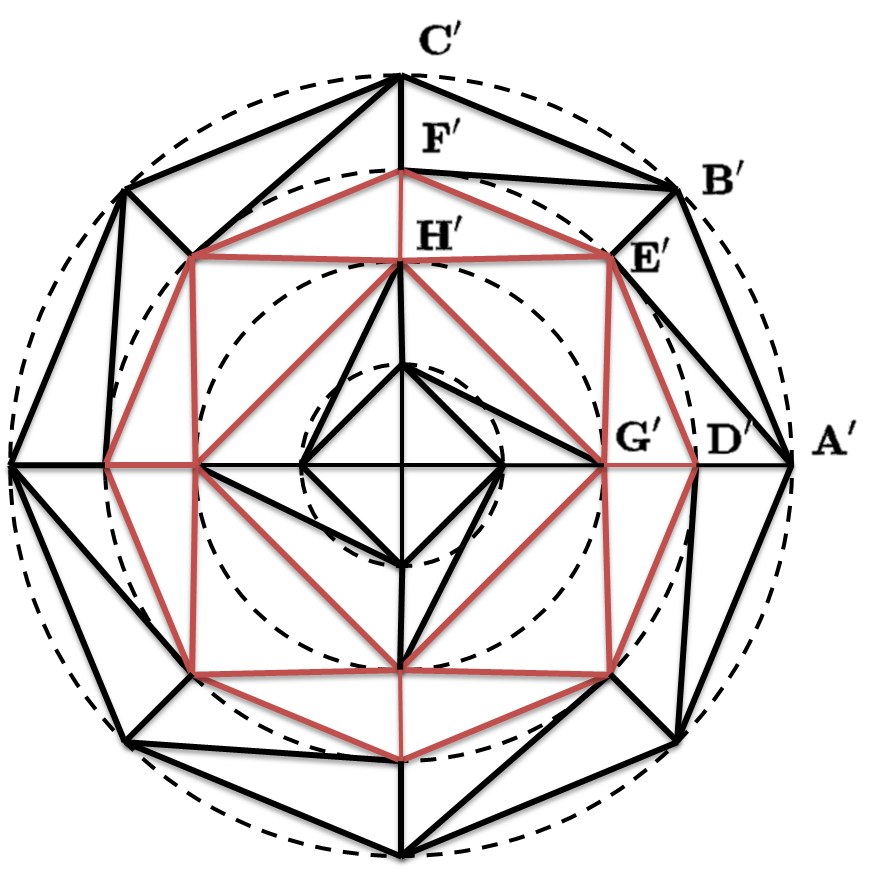}
\end{minipage}%
}%
\centering
\caption{The adaptive partition scheme at each layer.}
\end{figure}

Our criteria for specifying $k_i$ is that the area of each triangle in $\tilde\Omega_i$ for $i=2,\cdots,2N$ has almost the same size as the triangle in $\tilde\Omega_1$, see Figure 3(b). More precisely, we take the rule
\begin{eqnarray}\label{liu44-03-2}
|s_i|:=\frac{|\tilde\Omega_{i}|}{2\cdot2n_{i}}\approx \frac{|\tilde\Omega_{1}|}{2n_{1}}:=|s_1| \hbox{ or }
|s_i|:=\frac{|\tilde\Omega_{i}|}{3\cdot2n_{i}} \approx \frac{|\tilde\Omega_{1}|}{2n_{1}}:=|s_1|,
\end{eqnarray}
which is specified for $i=2,\cdots,2N$ from the quantitative rule
\begin{eqnarray}\label{liu44-03-3}
\frac{1}{3}\le\frac{|s_{i}|}{|s_{1}|}\le 3.
\end{eqnarray}

Using the grids $\{\tilde\xi_i: i=1,\cdots, 2N\}$ with $\tilde\xi_1=\xi_{\frac{1}{2}}$ and $\tilde\xi_{2N}=\xi_N$ in \eqref{liul11-04-02}
together with \eqref{liu11-03-01}, it follows from the straightforward but lengthy computations that
\begin{equation}\label{liu44-13}
\int_\Omega s(x)dx\equiv\sum_{i=1}^N\int_{\Omega_i} s(x)dx\approx
\sum_{k=1}^{2N-1}c_k\int_0^{2\pi}s_J(\tilde\xi_k,\tau)d\tau\approx
\sum_{k=1}^{2N-1}c_k\frac{2\pi}{2n_k}\sum_{j=0}^{2n_k-1}s_J(\tilde\xi_k,t_j^k),
\end{equation}
with $t_j^k:=j\frac{\pi}{n_k}$ and  the weights
\begin{eqnarray}\label{liu44-14}
c_{2i-1}=
\begin{cases}
\frac{1}{N}, &i=1,N,\\
\frac{2}{3N},&i=2,\cdots,N-1,
\end{cases}
\qquad
c_{2i}=
\begin{cases}
\frac{1}{6N},&i=1,N,\\
\frac{1}{3N},&i=2,\cdots,N-2.
\end{cases}
\end{eqnarray}

\begin{table}
\begin{center}
\caption{The partition numbers at different layers from our self-adaptive strategy.}
\label{table1}
\begin{tabular}{|c |c| c |c |c| c |c |c |c |c|}
\hline
$n_1$&$n_2$&$n_3$&$n_4$&$n_5$&$n_6$&$n_7$&$n_8$&$n_9$&$n_{10}$\\
\hline
$2^3$&$2^4$&$2^5$&$2^5$&$2^6$&$2^6$&$2^6$&$2^6$&$2^7$&$2^7$\\
\hline
$n_{11}$&$n_{12}$&$n_{13}$&$n_{14}$&$n_{15}$&$n_{16}$&$n_{17}$&$n_{18}$&$n_{19}$&$n_{20}$\\
\hline
$2^7$&$2^7$&$2^7$&$2^7$&$2^7$&$2^7$&$2^8$&$2^8$&$2^8$&$2^8$\\
\hline
\end{tabular}
\label{Table-03-1}
\end{center}
\end{table}

Since the area $\tilde\Omega_i$ is hard to compute for $\Omega$ of general shape, by employing \eqref{liu44-03-1} as a benchmark for $\tilde\Omega_i$ in \eqref{liu44-03-2} and \eqref{liu44-03-3}, we finally take $k_{i}=\left \lfloor \log_{2}{(2i-1)} \right \rfloor$
for $i=2,\cdots,2N$ with specified $k_1$. For the concrete case $N=10,k_1=3$, the step sizes with respect to $\tau$ from this self-adaptive strategy for different $\tilde \xi_i$ are presented in Table 1. It can be seen that partition numbers  from the inner curve to the outer curve will become large.

%
%
%

Based on the decomposition
\eqref{liu44-17}, we apply \eqref{liu44-11} to compute the singular integrals and
\eqref{liu44-13}-\eqref{liu44-14} to compute the regular ones in \eqref{liu44-17}, which lead to the linear system
\begin{eqnarray}\label{liu44-26}
\begin{cases}
\tilde\mu(\tilde p(\eta,t))- \sum\limits_{i=1}^{2N-1}c_i \sum\limits_{j=0}^{2n_{i}-1}\Hat{K}_{11}(\eta,t;\tilde\xi_{i},t_{j}^i)
\tilde\mu_{ij}
+
\sum\limits_{j=0}^{2n_{2N}-1}\frac{1}{2n_{2N}}K_{12}(\eta,t;t_{j}^{2N})\tilde\psi_{j}=\frac{F(\tilde p(\eta,t))}{\sigma(\tilde p(\eta,t))},  \\
\sum\limits_{i=1}^{2N-1}c_i\frac{1}{2n_i} \sum\limits_{j=0}^{2n_{i}-1}K_{21}(t;\tilde\xi_{i},t_j^i)\tilde\mu_{ij}+\tilde\psi(\tilde p(1,t))-\sum\limits_{j=0}^{2n_{2N}-1}\frac{1}{2n_{2N}}K_{22}(t;t_j^{2N})\tilde\psi_{j}=-2f(\tilde p(1,t)),
\end{cases}
\end{eqnarray}
with the known coefficients
\begin{eqnarray*}
\Hat{K}_{11}(\eta,t;\tilde\xi_{i},t_j^i):=\frac{1}{2n_{i}}
K^{(11)}(\eta,t;\tilde\xi_{i},t_j^i)+\frac{ J(\tilde\xi_{i} ,t_j^i)}{\eta}\frac{G_2(\eta,t)}{2|x'(t)|}T_{j}(t;n_{i})
\end{eqnarray*}
and the unknowns $\tilde \mu_{ij}:=\tilde \mu(\tilde p(\tilde\xi_{i},t_j^i)),\; \tilde \psi_{j}:=\tilde \psi(\tilde p(1,t_j^{2N}))$.

By specifying $(\eta,t)$ in \eqref{liu44-26} at the collocation points $(\tilde\xi_i, t_j^i)$ with $j= 0,\cdots, 2n_i-1, i=1,\cdots,2N-1$ in the first equation and $t=t_j^{2N}$ in the second equation, we can finally solve $\tilde \mu_{ij}$ for $j= 0,\cdots,2n_i-1, \;i=1,\cdots,2N-1$ and $\tilde\psi_{j}$ for $j=0,1,\cdots,2N-1$.


\section{The dual reciprocity method}

In the above section, we already propose a scheme dealing with the volume potential \eqref{liu22-07} by partitioning the domain $\Omega$ in some uniform way, which is still the realization of domain integral and consequently suffers from the large number of the unknowns $\{\tilde\mu_{i,j}: j=0,1,\cdots, 2n_i-1, i=1,\cdots, 2N-1\}$ from the partition of $\Omega$. In this section, we apply the dual reciprocity method (DRM) to transform the domain integral into boundary integral. One of the attractive feature of DRM is that the choice of interior nodes can be random and compute the weak singularity in the domain integral explicitly.

The essence of DRM is to transform the volume integral with fundamental solution as kernel function into a surface integral by expanding the integrand in terms of some base functions $\{f_k: k=1,\cdots,M\}$, and consequently decreases the number of grids for integration \cite{DRM_book}.  More precisely, we expand
\begin{eqnarray}\label{liu23-02-18-01}
\tilde\mu(x)\approx\sum_{k=1}^{M}\alpha_k f_k(x),
\end{eqnarray}
where $\{f_k(x):k=1,\cdots,M\}$ constitutes the basis functions in the form
\begin{eqnarray}\label{liu23-02-18-02}
f_k(x)=\Delta \hat f_k(x)
\end{eqnarray}
for some known $\hat f_k(x)$
depending on interior nodes $x_k\in\Omega$ and $\alpha_k$ is the expansion coefficients to be determined.
Substituting \eqref{liu23-02-18-02} and \eqref{liu23-02-18-01} into   $\mathbb{K}_{11}^{\Omega\to\overline\Omega}[\tilde\mu](x)$ in \eqref{liu22-07} yields
\begin{align}\label{liu23-02-18-03}
\mathbb{K}_{11}^{\Omega \to \Omega}[\tilde\mu](x)
&=\int_\Omega \nabla_x\Phi(x,y)\cdot\nabla\ln\sigma(x)\tilde\mu(y)dy\nonumber\\
&\approx
\sum_{k=1}^{M}\alpha_k\nabla_x\left ( \int_\Omega \Phi(x,y)\Delta_y\hat f_k(y) dy\right ) \cdot\nabla\ln\sigma(x),  \quad x\in\Omega.
\end{align}
Defining $D_k(x):=\int_\Omega \Phi(x,y)\Delta_y\hat f_k(y) dy$  and integrating by parts, we have
\begin{align}\label{liu23-02-18-03-01}
D_k(x)&=  \int_{\partial \Omega} [\Phi(x,y)\partial_{\nu(y)} \hat f_k(y)
- \hat f_k(y)\partial_{\nu(y)}\Phi(x,y)] ds(y)
+\int_\Omega\hat f_k(y)\Delta_y\Phi(x,y)dy\nonumber\\
&=\int_{\partial \Omega}[\Phi(x,y)\partial_{\nu(y)} \hat f_k(y)
- \hat f_k(y)\partial_{\nu(y)}\Phi(x,y)] ds(y)
-\hat f_k(x), \quad x\in\Omega.
\end{align}

It is noted that $D_k(x)$ involves only boundary integral and the singularity in $\mathbb{K}_{11}^{\Omega\to\Omega}[\tilde\mu](x)$ has been integrated explicitly. Thus it is convenient to handle $\mathbb{K}_{11}^{\Omega\to\Omega}[\tilde\mu](x)$ numerically, since only the boundary discretization is required for specified  $\hat f_k$ in \eqref{liu23-02-18-02}.

Similarly, the domain integral $\mathbb{K}_{21}^{\Omega\to\partial\Omega}[\tilde\mu](x)$ can also be converted into the boundary integral
\begin{align}\label{liu23-02-18-05}
\mathbb{K}_{21}^{\Omega \to \partial\Omega}[\tilde\mu](x)
=-2\int_\Omega \Phi(x,y)\tilde\mu(y)dy
\approx
-2\sum_{k=1}^{M}\alpha_k D_k(x),  \quad x\in\partial\Omega,
\end{align}
with $D_k(x)$ for $x\in\partial\Omega$ is defined by
$D_k(x):=\lim_{z\in\Omega, z\to x}D_k(z)$,
which is of the expression
\begin{align}\label{liu23-Dk-boundary}
D_k(x)&=\int_{\partial \Omega} [\Phi(x,y)\partial_{\nu(y)} \hat f_k(y)
- \hat f_k(y)\partial_{\nu(y)}\Phi(x,y)] ds(y)
+\int_\Omega \hat f_k(y)\Delta_y\Phi(x,y)dy\nonumber\\
&=\int_{\partial \Omega} [\Phi(x,y)\partial_{\nu(y)} \hat f_k(y)
- \hat f_k(y)\partial_{\nu(y)}\Phi(x,y)] ds(y)
-\frac{1}{2}\hat f_k(x),\quad x\in\partial\Omega
\end{align}
due to the jump relation of surface potentials.
The boundary integrals with $\Phi(x,y)$ and  $\partial_{\nu(y)}\Phi(x,y)$ being kernels can be computed from standard formulas.

Since we transform all the domain integrals in the linear system into boundary integrals via DRM, the numerical solution of the boundary value problem \eqref{liu11-01} can be generated conveniently, since the linear system for the density functions  involves only the values on the boundary grids. Such a scheme will decrease the computational cost, especially in 3-dimensional cases.


Now we compute $\mathbb{K}_{11}^{\Omega \to \Omega}[\tilde\mu](x)$ for $x\in\Omega$
and $\mathbb{K}_{21}^{\Omega \to \partial\Omega}[\tilde\mu](x)$ for $x\in\partial\Omega$ by  \eqref{liu23-02-18-03} and \eqref{liu23-02-18-05} in terms of the expression $D(x)$.

Here we only give the scheme for the case $\Omega\subset \mathbb{R}^2$, where the
Nystr$\ddot{\text{o}}$m scheme can be used to construct the quadrature rules for boundary integrals. As for the case $\Omega\subset \mathbb{R}^3$,  the discrete scheme stated in \cite{Colton2} (Chapter 3.7) can be applied  to handle the singularities and build an efficient scheme for computing the integrals with smooth integrals. In section 5, a numerical example showing this methodology will be presented.

Assume that the boundary curve $\partial \Omega$ has a $2\pi$-periodic parametric representation
$$\tilde x(t):=x(t)+P_0 \in \partial \Omega, \quad 0\le t \le 2\pi.$$

Introduce the quadrature points $t_j=\pi j/n,j=0,1,\dots,2n-1$ for boundary curve $\{\tilde x(t): t\in [0,2\pi]\}$. Since $\nabla D_k(x)\cdot \nabla \ln \sigma(x)$ with $D_k(x)$ given by \eqref{liu23-02-18-03-01} has smooth kernel function, the discrete version of $\mathbb{K}_{11}^{\Omega\to\Omega}[\tilde\mu](x)$ for $x\in\Omega$ in terms of Nystr$\ddot{\text{o}}$m is \cite{Colton2}
\begin{align}\label{liu23-k11_discrete}
\mathbb{K}_{11}^{\Omega\to\Omega}[\tilde\mu](x)
&\approx \sum_{k=1}^{M}\alpha_k \left [ \sum_{j=1}^{2n}\frac{1}{2n} \left( \tilde G_{j}(x)\hat q_{jk}-\tilde H_{j}(x)|x'(t_j)|\hat f_{jk}\right)-\nabla\hat f_{k}(x)\cdot \nabla \ln \sigma(x)\right ]
\end{align}
for $x\in \Omega$, where $\hat f_{jk}=\hat f_k(y_j)$,
 $\hat q_{j k}= |x'(t_j)|\left.\partial_{\nu(y)} \hat f_k(y)\right | _{y=y_j} $ with $y_j=\tilde x(t_j)$ and
\begin{align*}
    \tilde G_{j}(x)=\frac{(x-y_j)\cdot \nabla \ln \sigma(x)}{|x-y_j|^2} \qquad
    \tilde H_{j}(x)=\left.\frac{\partial}{\partial\nu(y)}\left(\frac{(x-y)\cdot \nabla \ln \sigma(x)}{|x-y|^2}\right)\right | _{y=y_j}
\end{align*}

For computing \eqref{liu23-02-18-05} with logarithmic singularity in $D_k(x)$ for $x\in \partial\Omega$, we firstly make the decomposition
\begin{align*}
    &-2D_k(\tilde x(t))\\
    =&\frac{1}{2\pi}\int_{0}^{2\pi}\left[\ln| x(t)-x(\tau)|^2\partial_{\nu(\tilde x(\tau))}\left(\hat{f}_k(\tilde x(\tau))\right)|x'(\tau)|+\hat{f}_k(\tilde x(\tau))K_{22}(t;\tau)\right]d\tau+\hat{f}_k(\tilde x(t))\\
    =&\frac{1}{2\pi}\int_{0}^{2\pi}\left[\ln\frac{| x(t)-x(\tau)|^2}{4\sin^2\frac{t-\tau}{2}}+\ln\left(4\sin^2\frac{t-\tau}{2}\right)\right]|x'(\tau)|\partial_{\nu(\tilde x(\tau))}\left(\hat{f}_k(\tilde x(\tau))\right)d\tau+ \\
    &\frac{1}{2\pi}\int_{0}^{2\pi}
    K_{22}(t,\tau)\hat{f}_k (\tilde x(\tau)) d\tau+\hat{f}_k(\tilde x(t)), \quad 0\le t \le 2\pi,
\end{align*}
which leads to for $\tilde x\in\partial\Omega$ that
\begin{align}
    \mathbb{K}_{21}^{\Omega \to \partial\Omega}[\tilde\mu](\tilde x(t))
& \approx \sum_{k=1}^{M}\alpha_k \left [\sum_{j=1}^{2n}\left(G_{j}(\tilde x(t))\hat q_{jk}+
\frac{1}{2n}K_{22}(t,t_j)\hat f_{jk}\right)+\hat f _{k}(\tilde x(t))\right],
\end{align}
where $G_j(\tilde x(t))$ is a continuous function
\begin{align*}
    G_{j}(\tilde x(t))=F_j(t;n)+
\begin{cases}
\frac{1}{2n}\ln(|x'(t)|^2), &t=t_j
\\
\frac{1}{2n}\ln\frac{|x(t)-x(t_j)|^2}{4\sin^2\frac{t-t_j}{2}}, &\hbox{elsewhere}
\end{cases}
\end{align*}
with $F_j(t;n)$ defined in \eqref{liu-Fj_Tj}.
Finally we yield the following approximate version of \eqref{liu22-07}:
\begin{align}\label{liu-23-discrete}
    \begin{cases}
    \sum\limits_{k=1}^{M} \alpha_k f_k(x)-\sum\limits_{k=1}^{M}\alpha_k DK_{11}(x;k)-\sum\limits_{j=1}^{2n}\frac{1}{2n}\tilde H_j(x)|x'(t_j)|\tilde \psi_j=\frac{F( x)}{\sigma(x)},\\
    \sum\limits_{k=1}^{M}\alpha_k DK_{21}(t;k)+\tilde \psi (\tilde x(t))-\sum\limits_{j=1}^{2n}\frac{1}{2n}K_{22}(t;t_j)\tilde \psi_j =-2f(\tilde x(t))
    \end{cases}
\end{align}
with respect to the unknowns $\alpha_k (k=1,\cdots,M)$ and $\tilde \psi_j:=\tilde \psi(\tilde x(t_j)) (j=1,\cdots,2n)$, where the known coefficients
\begin{align*}
    DK_{11}(x;k)&=\sum_{j=1}^{2n}\frac{1}{2n} \left( \tilde G_{j}(x)\hat q_{jk}-\tilde H_{j}(x)|x'(t_j)|\hat f_{jk}\right)-\nabla\hat f_{k}(x)\cdot \nabla \ln \sigma(x),\\
    DK_{21}(t;k)&=\sum_{j=1}^{2n}\left(G_{j}(\tilde x(t))\hat q_{jk}+
\frac{1}{2n}K_{22}(t,t_j)\hat f_{jk}\right)+\hat f _{k}(\tilde x(t)).
\end{align*}

By specifying $(x,t)$ in \eqref{liu-23-discrete} at the collocation points $(x
_k,t_j)$ with $k=1,\cdots,M$ and $j=1,\cdots,2n$, we can finally solve $\alpha_k$ for $k=1,\cdots,M$ and $\tilde \psi_j$ for $j=1,\cdots,2n$. Finally the density function $\tilde \mu(x)$ can be approximated by \eqref{liu23-02-18-01}.

The expansion of density $\tilde\mu(x)$ needs to specify the basis function $f_k(x)$.  Since $\tilde f_k(x)$ is required to meet $\Delta \tilde f_k(x)=f_k(x)$, it is convenient to choose $f_k(x)$ in some special form. For specified internal grid $x_k\in\Omega\subset \mathbb{R}^d$, as recommended in \cite{DRM_Brunton}, a typical form is
\begin{equation}\label{liu11-01-02}
f_k(x)=1+r_k(x):=1+|x-x_k|, \quad k=1,\cdots,M,
\end{equation}
where $r_k(x)$ is the distance between the field point $x$ and the internal grid $x_k$.

For radial basis function $f_k$ in this form,
we also find the radial basis function $\hat f_k(x):=\hat f(r_k)$ to  \eqref{liu23-02-18-02}, which satisfies the ordinary differential equation \cite{DRM_Brunton}
$$
\frac{\mathrm{d}^2 \hat{f} }{\mathrm{d} r_k^2} +\frac{d-1}{r_k}\frac{\mathrm{d} \hat{f} }{\mathrm{d}r_k}=1+r_k.
$$
One solution to this equation is
\begin{eqnarray}
\hat f_k(x)=
\begin{cases}
\frac{r_k^2(x)}{4}+\frac{r_k^3(x)}{9}, &d=2,\\
\frac{r_k^2(x)}{6}+\frac{r_k^3(x)}{12}, &d=3.
\end{cases}
\end{eqnarray}
Then the derivatives $\partial_{\nu(x)}\hat f_k(x), \nabla \hat f_k(x)$ can be computed easily.

\section{Numerical experiments }

Now we do numerics for solving \eqref{liu11-01} by our proposed schemes for two examples in $\Omega\subset \mathbb{R}^d$, showing the validity of our proposed schemes. Moreover, we also compare the numerical performances of two schemes.

{\bf Example 1.} Consider  two different domains $\Omega\subset \mathbb{R}^2$. One is a heart-shaped domain with
\begin{equation}\label{nu1}
\partial\Omega:=\Gamma\equiv\{x(t)=(0.2\cos t,0.4\sin t-0.3 \sin^{2}t),\; t\in [0,2\pi]\}+(0.5,1),
\end{equation}
and the other one is an elliptical domain with
\begin{equation}\label{nu2}
\partial\Omega:=\Gamma\equiv\{x(t)=(\cos t,0.5 \sin t),\; t\in [0,2\pi]\}+(0.5,1),
\end{equation}
that is, we always take $P_{0}=(0.5,1)$ as the center of domain $\Omega$.
For the conductivity
$$\sigma(x,y)=2+\frac{1}{5}\sin(25x)+\frac{1}{5}\cos(25y)$$ and the source function $$F(x,y)=-10[x\cos(25x)+\sin(25y)]-[4+\frac{2}{5}\sin(25x)+\frac{2}{5}\cos(25y)]$$
in $\Omega\subset \mathbb{R}^2$,  it is easy to verify that
$u_{ex}(x,y)=x^2-2y+3$
is the exact solution to the PDE in \eqref{liu11-01}. Correspondingly, we take $f(x,y)=u_{ex}(x,y)|_{\partial\Omega}$ as the boundary value in \eqref{liu11-01}.

\vskip 0.3cm

{\bf 1A: Realization of ADS in section 3.}

For the discretization of $\Omega$ along radial direction, we divide $[0,1]$ for $\xi$ as $20$ subintervals in $(\xi,\tau)$ coordinates by setting $N=10$. Then the outer boundary of $\tilde\Omega_i$ has the representation
\begin{eqnarray}\label{liu44-27}
\Gamma_i: =\{\tilde\xi_i x(t): t\in [0,2\pi]\}+(0.5,1)
\end{eqnarray}
with $\tilde\xi_i=i\times\frac{1}{20}$ for $i=1,\cdots,20$. To check the numerical performances, we consider the error of numerical solution in each curve $\Gamma_i\in\Omega$ (local error) and the whole domain $\Omega$ (average error) by introducing the relative error functions
\begin{eqnarray}\label{liu44-28}
\frac{\left \| u_{Nn}-u_{ex} \right \|_{L^2(\Gamma_i)} }{\left \| u_{ex} \right \|_{L^2(\Gamma_i)}}\approx Err_L^i&:\equiv&\left (\frac{  \sum_{j=0}^{2n_i-1} (u_{Nn}-u_{ex})^2(\tilde\xi_i,t_j^i)|\tilde\xi_i x'(t_j^i)|}{\sum_{j=0}^{2n_i-1} u_{ex}^2(\tilde\xi_i,t_j^i)|\tilde\xi_i x'(t_j^i)|}\right )^{1/2},
\\
\frac{\left \| u_{Nn}-u_{ex} \right \|_{L^2(\Omega)} }{\left \| u_{ex} \right \|_{L^2(\Omega)}}\approx Err_A&:\equiv&\left (\frac{  \sum_{i=1}^{2N-1}\frac{c_i}{n_i}\sum_{j=0}^{2n_i-1} (u_{Nn}-u_{ex})^2(\tilde\xi_i,t_i^j)J(\tilde\xi_i,t_i^j)}
{\sum_{i=1}^{2N-1}\frac{c_i}{n_i}\sum_{j=0}^{2n_i-1} u_{ex}^2(\tilde\xi_i,t_i^j)J(\tilde\xi_i,t_i^j)}\right )^{1/2}
\end{eqnarray}
respectively.

\begin{figure}[htbp]
\centering
\subfigure[Exact solution]{
\begin{minipage}[t]{0.31\linewidth}
\centering
\includegraphics[scale=0.3]{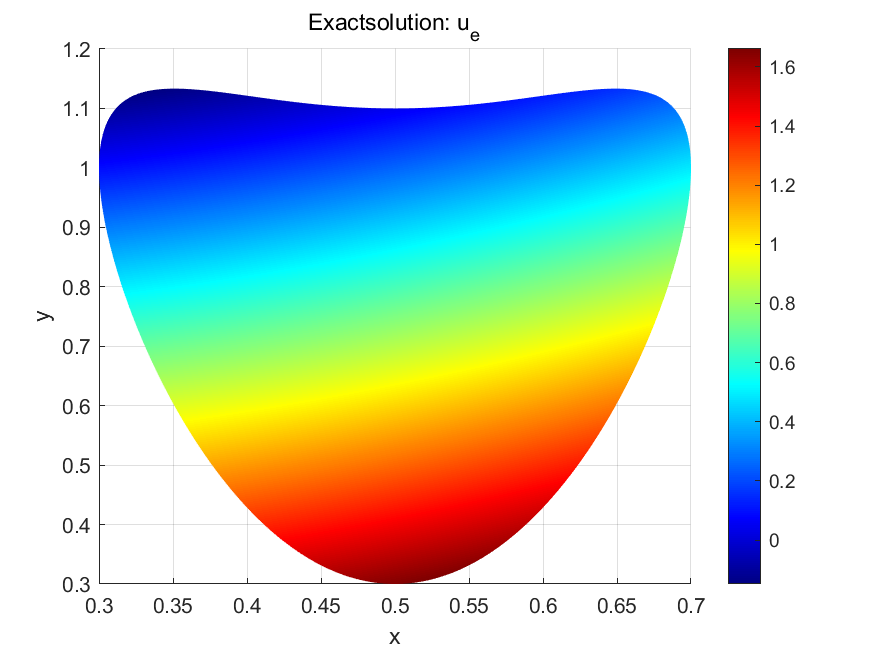}
\end{minipage}%
}%
\subfigure[Numerical solution]{
\begin{minipage}[t]{0.31\linewidth}
\centering
\includegraphics[scale=0.2]{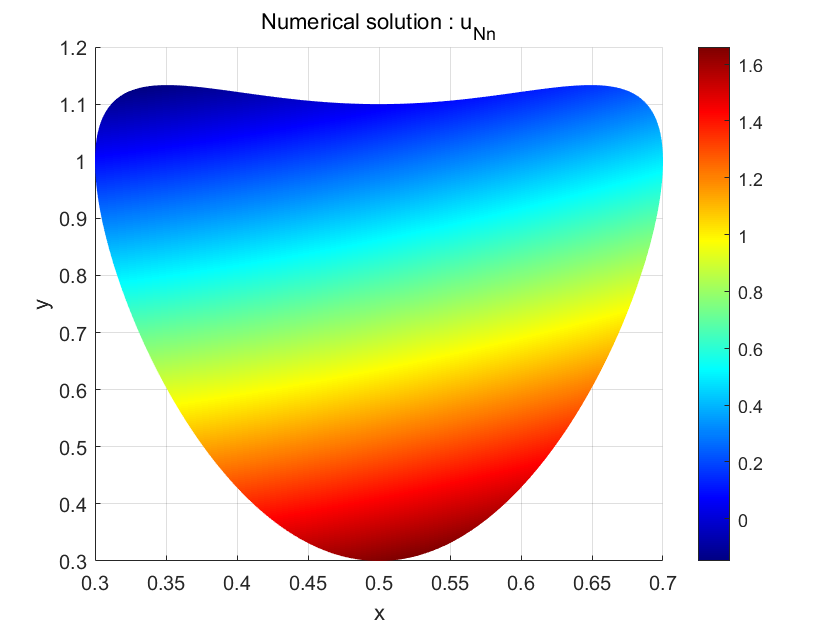}
\end{minipage}%
}%
\subfigure[Absolute error]{
\begin{minipage}[t]{0.31\linewidth}
\centering
\includegraphics[scale=0.3]{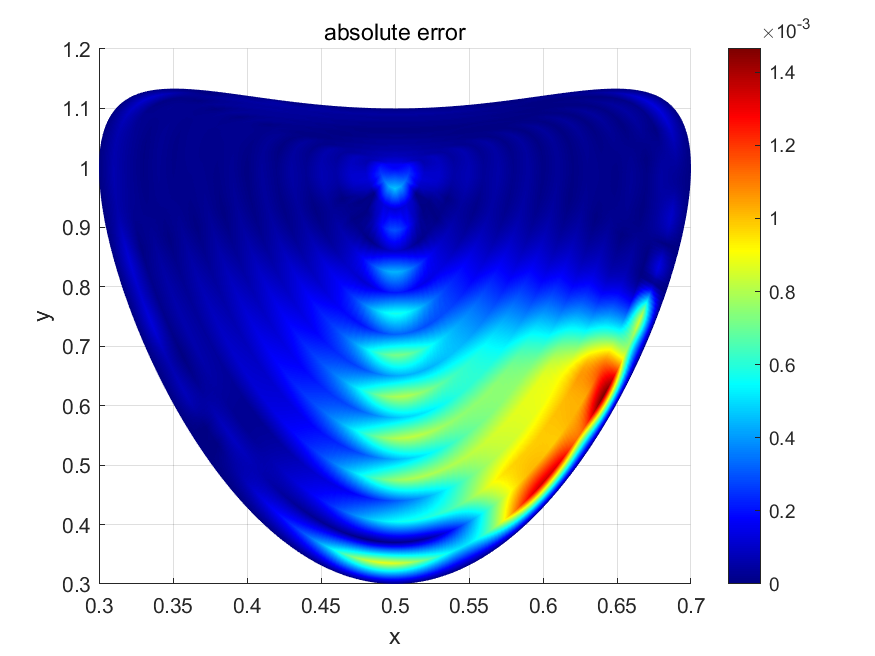}
\end{minipage}
}
\centering
\caption{Numerical result for Example 1 in heart-shaped domain with $N=10,k_1=3$.}
\end{figure}

\begin{figure}[htbp]
\centering
\subfigure[Exact solution]{
\begin{minipage}[t]{0.31\linewidth}
\centering
\includegraphics[scale=0.3]{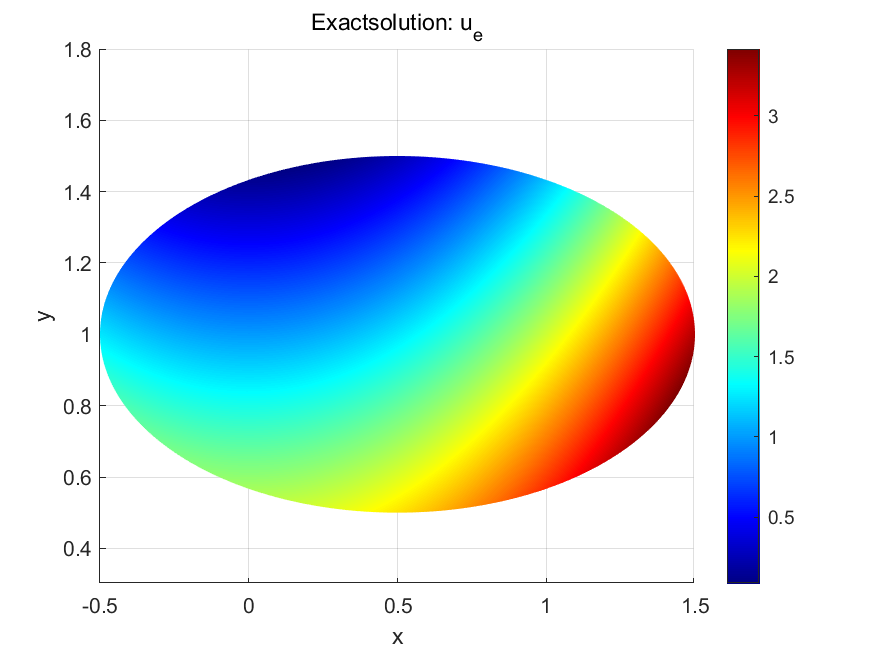}
\end{minipage}
}
\subfigure[Numerical solution]{
\begin{minipage}[t]{0.31\linewidth}
\centering
\includegraphics[scale=0.3]{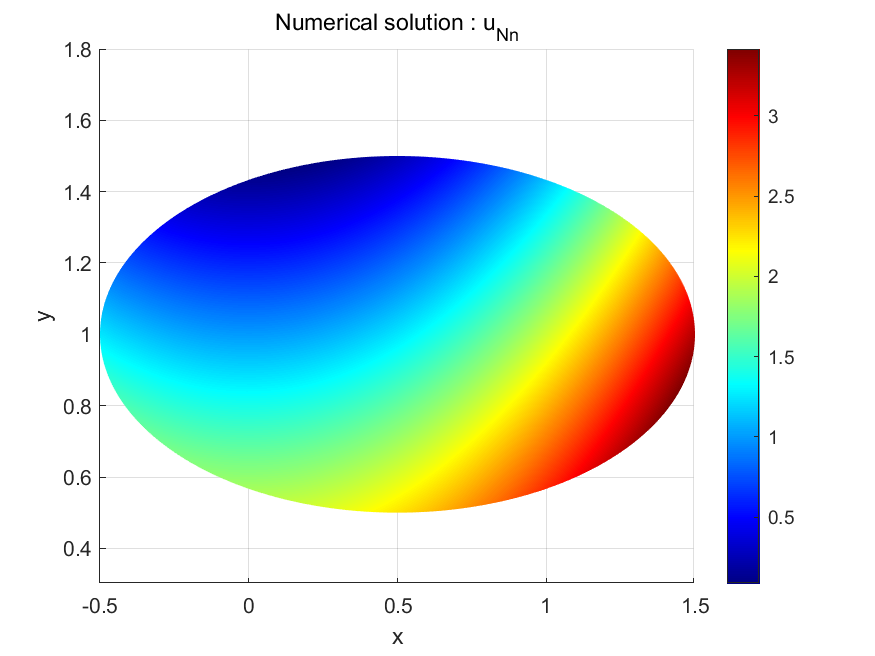}
\end{minipage}
}
\subfigure[Absolute error]{
\begin{minipage}[t]{0.31\linewidth}
\centering
\includegraphics[scale=0.3]{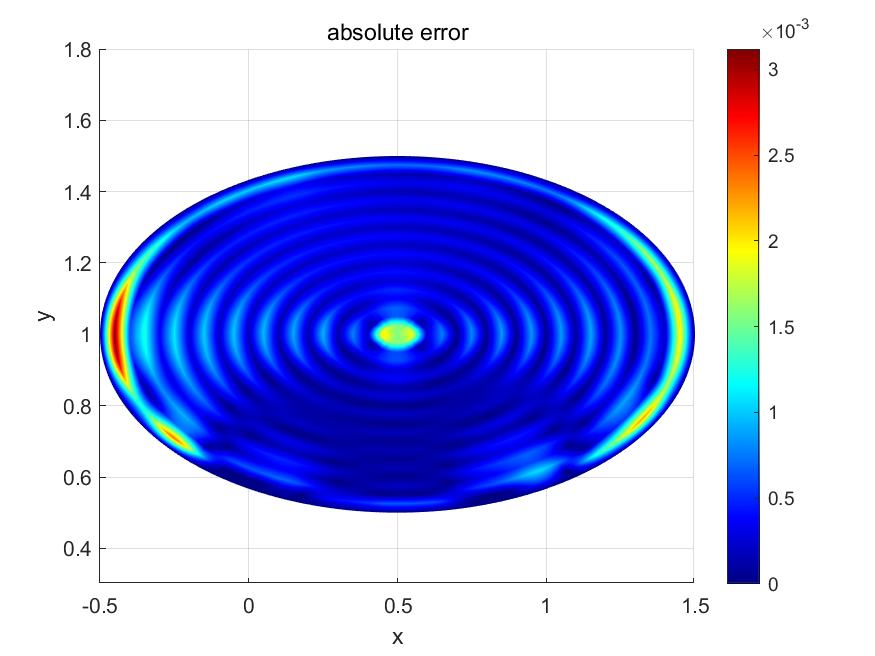}
\end{minipage}
}
\centering
\caption{Numerical result for Example 1 in elliptical domain with $N=10,k_1=3$.}
\end{figure}

The results in two different domains specified by \eqref{nu1} and \eqref{nu2} are illustrated in Figure 4 and Figure 5, where the columns (a),(b),(c) show the exact solution, numerical solution from our ADS, as well as the point-wise error distributions, respectively. It can be seen that ADS proposed in section 3 can yield numerical solution to a very satisfactory level. An interesting observation is that the maximum error always appears in the center area of $\Omega$ near $P_0$ and boundary curve $\Gamma$, the reason is that we always take a cycle $\Omega_1$ without any further partition inside $\Omega_1$ and $\Omega_{2N}$. Also, when we compute the integrals in $\Omega_1$ and $\Omega_{2N}$, as shown in \eqref{liu11-03-01} for $i=1,N$, only the middle-point rectangle quadrature formula are applied, rather than the Simpson's formula.

To describe the error distributions in $\Omega$ precisely, which cannot be observed from Figure 4 and Figure 5, we give a quantitative description in Table 2 and Table 3 for two different domains, where we choose four different curves $\Gamma_2, \Gamma_7,\Gamma_{12},\Gamma_{17}$ inside two domains to show the local error distributions, and also the relative error in $\Omega$ for the average error. For these numerics, we test our scheme for $k_1=1,2,3$, which means the curve $\Gamma_1$ is divided as $2n_1=2\times 2^{k_1}$ sub-intervals along $\tau$ directions. For other curves $\Gamma_i$  with $i=1,\cdots,20$, the step sizes with respect to $\tau$ are $2n_i=2\times n_12^{k_i}$, where we take $k_i=\left \lfloor \log_{2}{(2i-1)} \right \rfloor$.

\begin{table}[h]
\begin{center}
\caption{Relative error for Example 1 in heart-shaped domain. }
\label{table2}
\renewcommand{\arraystretch}{1.2}
\begin{tabular}{|c|c|c|c|}
\hline
$k_1$ &1&2&3\\
\hline
$Err_L^2$ &$4.56500\times 10^{-3}$&$1.14630\times 10^{-3}$&$0.32475\times 10^{-3}$\\
\hline
$Err_L^7$&$6.62500\times 10^{-3}$&$1.67670\times 10^{-3}$&$0.38854\times 10^{-3}$\\
\hline
$Err_L^{12}$&$7.03540\times 10^{-3}$&$1.57700\times 10^{-3}$&$0.38128\times 10^{-3}$\\
\hline
$Err_L^{17}$&$12.4350\times 10^{-3}$&$1.89370\times 10^{-3}$&$0.51314\times 10^{-3}$\\
\hline
$Err_A$&$10.3731\times 10^{-3}$&$4.52940\times 10^{-3}$&$0.51072\times 10^{-3}$\\
\hline
\end{tabular}
\label{Table-03-2}
\end{center}
\end{table}
\begin{table}[h]
\begin{center}
\caption{Relative error for Example 1 in elliptical domain. }
\label{table3}
\renewcommand{\arraystretch}{1.2}
\begin{tabular}{|c|c|c|c|}
\hline
$k_1$ &1&2&3\\
\hline
$Err_L^2$&$1.63660\times 10^{-3}$&$1.68310\times 10^{-3}$&$1.63660\times 10^{-3}$\\
\hline
$Err_L^7$&$4.12310\times 10^{-3}$&$1.22140\times 10^{-3}$&$1.21280\times 10^{-3}$\\
\hline
$Err_L^{12}$&$3.59170\times 10^{-3}$&$0.39568\times 10^{-3}$&$0.25058\times 10^{-3}$\\
\hline
$Err_L^{17}$&$4.99230\times 10^{-3}$&$0.66747\times 10^{-3}$&$0.51239\times 10^{-3}$\\
\hline
$Err_A$&$1.52940\times 10^{-3}$&$1.10763\times 10^{-3}$&$0.86416\times 10^{-3}$\\
\hline
\end{tabular}
\label{Table-03-3}
\end{center}
\end{table}

\begin{figure}[H]
\centering
\subfigure[Error for domain \eqref{nu1}.]
{
\begin{minipage}[t]{0.4\linewidth}
\centering
\includegraphics[scale=0.25]{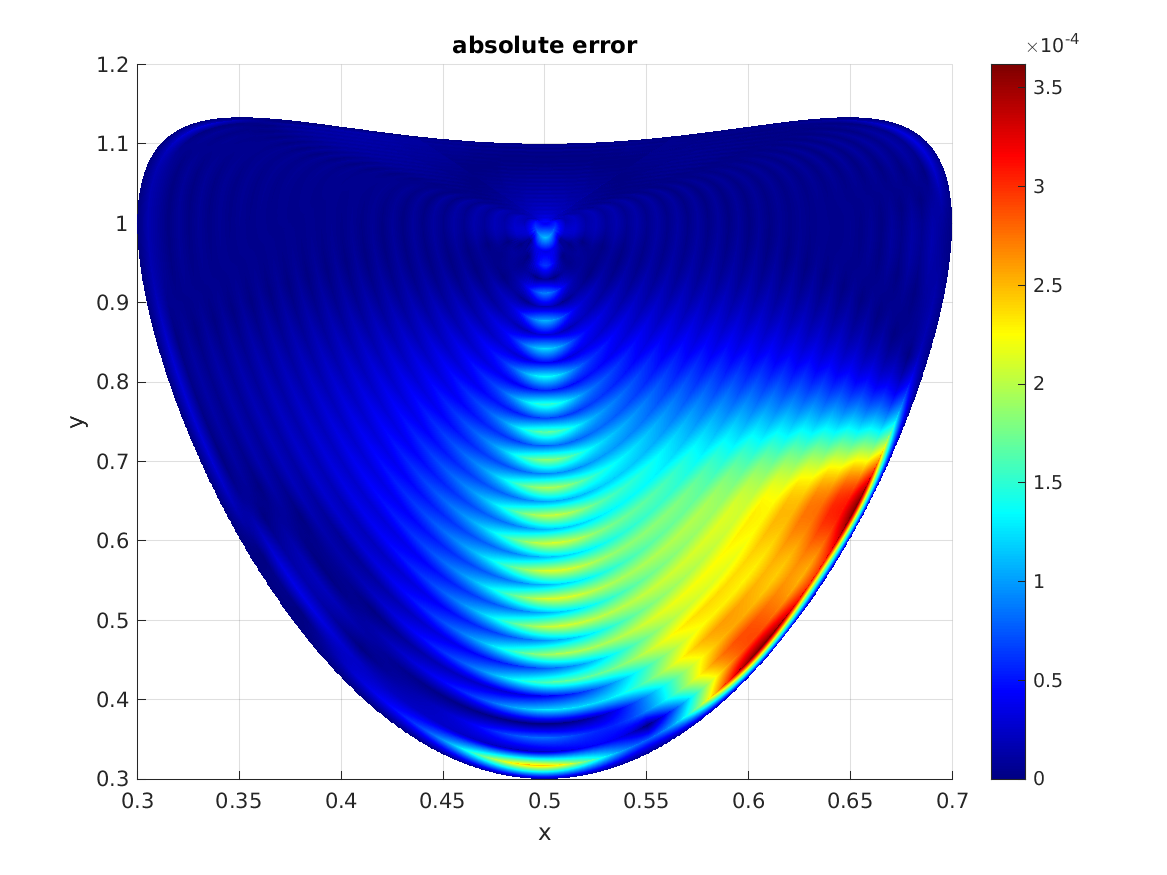}
\end{minipage}
}
\subfigure[Error for domain \eqref{nu2}.]
{
\begin{minipage}[t]{0.4\linewidth}
\centering
\includegraphics[scale=0.25]{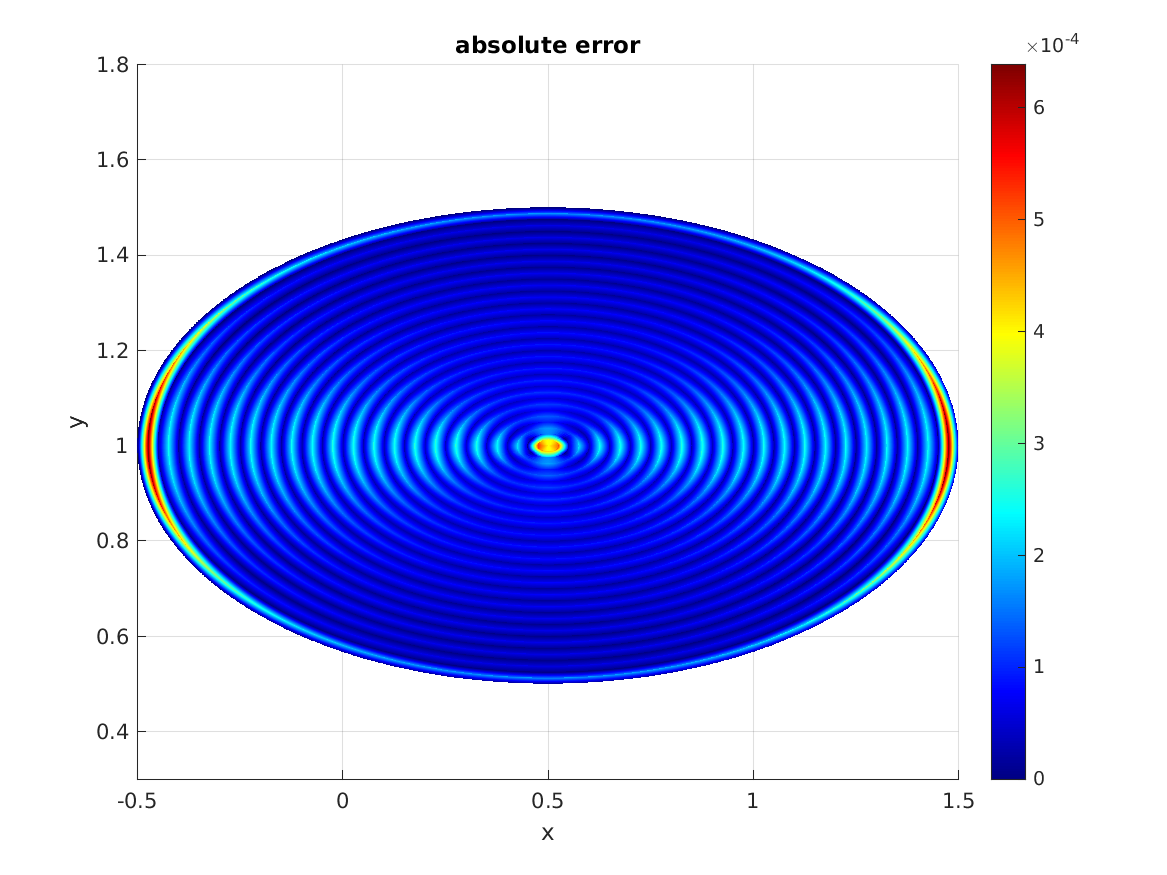}
\end{minipage}%
}%
\centering
\caption{Error distributions for Example 1 by refinement of $\Omega$  with $N=20,k_1=4$.}
\end{figure}

When we check the error distributions in $\Omega$ shown in Table 2 and Table 3, it can be seen that the error in all different closed curves $\Gamma_i$ and also the average error in the whole domain is always of the same amplitude $O(10^{-3})$, which reveal our uniform accuracy in the whole domain due to our self-adaptive partition strategy ADS in different $\Gamma_i$.

In the above implementations, we divide the domain $\Omega$ by the $\xi$-interval $[0,1]$  for fixed $N=10$ and $k_1=1,2,3$ dividing $\partial\Omega_1$ which also determines the partition size of $\Omega$. It is imaginable that the computation accuracy will be improved if we refine the grids of the domain $\Omega$, as shown in Figure 6 for two domains specified by \eqref{nu1} and \eqref{nu2}, where the absolute errors are improved to $O(10^{-4})$.


\vskip 0.3cm

{\bf 1B: Realization of DRM in section 4.}

In order to represent the accuracy of the solution by the DRM proposed in section 4, we define a mean absolute error and a  mean root square relative error by
\begin{align}
    Err_m :&\equiv \frac{1}{J}\sum\limits_{j=1}^{J}\left|u_{Nn}(x_j)-u_{ex}(x_j)\right|,\label{liu-23-DRMErr_m}\\
    Err_s :&\equiv \left (\frac{\sum\limits_{j=1}^{J}\left | u_{Nn}(x_j)-u_{ex}(x_j) \right |^2 }{\sum\limits_{j=1}^{J}\left | u_{ex}(x_j) \right |^2 }\right)^{1/2}\label{liu-23-DRMErr_s},
\end{align}
respectively,
where $\{x_j:j=1,\cdots,J\}\subset \Omega$ is the set of internal nodes, $u_{Nn}(x_j)$ and $u_{ex}(x_j)$ are the numerical solution and exact one in the grid $x_j$, respectively.  The distribution of internal nodes and boundary ones are presented in Figure \ref{liu-23-Nodes}.
\begin{figure}[htbp]
\centering
\subfigure[Node locations for domain \eqref{nu1}.]
{
\begin{minipage}[t]{0.45\linewidth}
\centering
\includegraphics[scale=0.32]{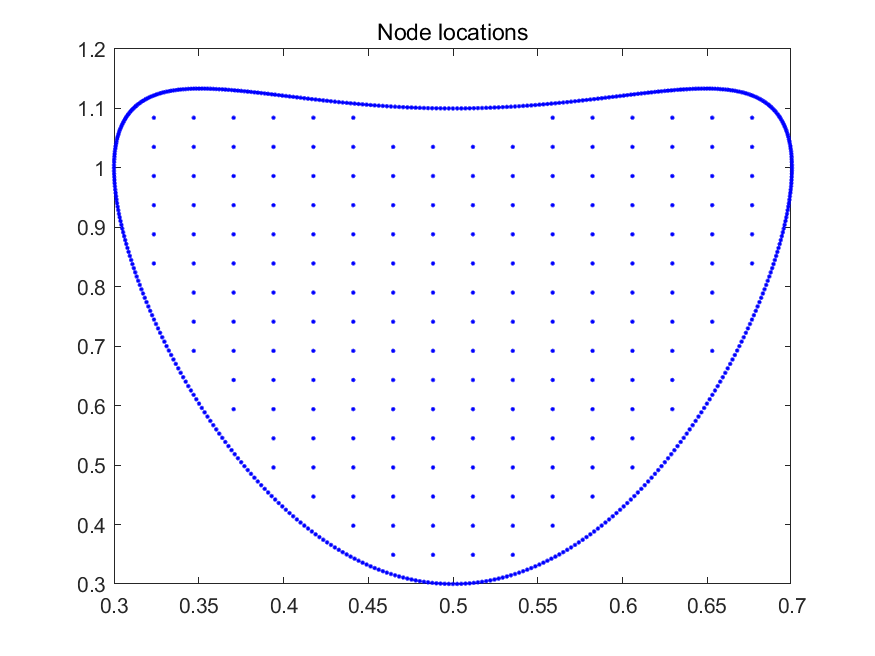}
\end{minipage}
}
\subfigure[Node locations for domain \eqref{nu2}.]
{
\begin{minipage}[t]{0.45\linewidth}
\centering
\includegraphics[scale=0.32]{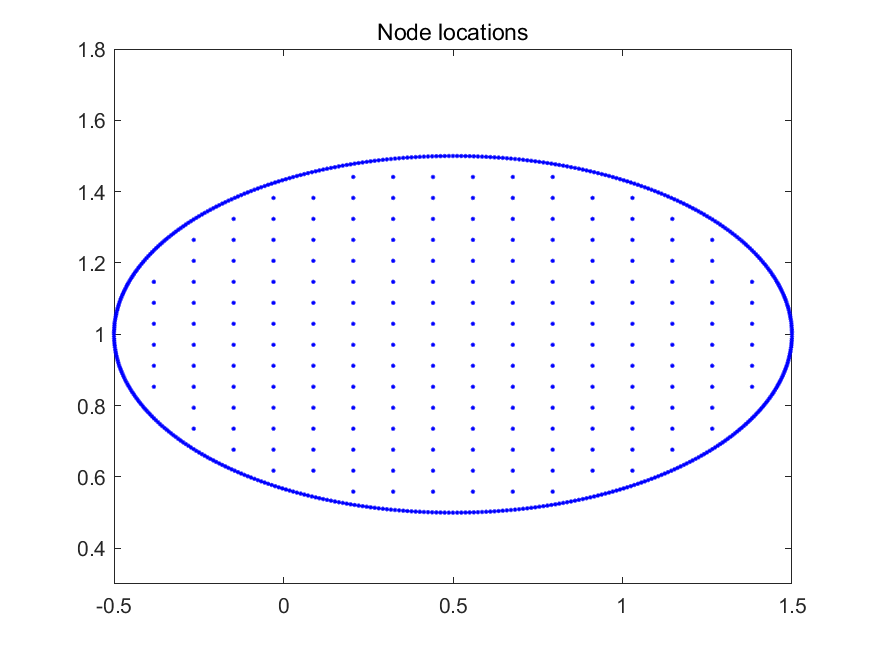}
\end{minipage}%
}%
\centering
\caption{Nodes distribution: (a) 196 internal nodes and 512 boundary nodes for domain \eqref{nu1}; (b) 208 internal nodes and 512 boundary nodes for domain \eqref{nu2}. }
\label{liu-23-Nodes}
\end{figure}

We compare the numerical performances of the DRM with the well-known FEM scheme. Since our proposed scheme is of 708 collocation nodes for
domain \eqref{nu1} and 720 collocation nodes for
domain \eqref{nu2} shown in Figure \ref{liu-23-Nodes}, we apply 717 nodes for domain \eqref{nu1} and 731 nodes for \eqref{nu2} to generate meshes in PDETOOL by Matlab for using FEM scheme, see (b) and (e) in Figure \ref{liu-23-result1}.

Under these discretizations with almost the same grids, our proposed DRM scheme is compatible with FEM.
Figure \ref{liu-23-result1} illustrates the results for two domains by DRM and the FEM in MATALB.
The left column of Figure \ref{liu-23-result1} shows the point-wise error distributions from our scheme, while the point-wise error for FEM is presented in the right column of Figure \ref{liu-23-result1}. It can be seen that the maximum errors of our scheme for two domains are always smaller than those of FEM for this example.

\begin{figure}[htbp]
\centering
\subfigure[Error for DRM in \eqref{nu1}.]{
\begin{minipage}[t]{0.3\linewidth}
\centering
\includegraphics[scale=0.28]{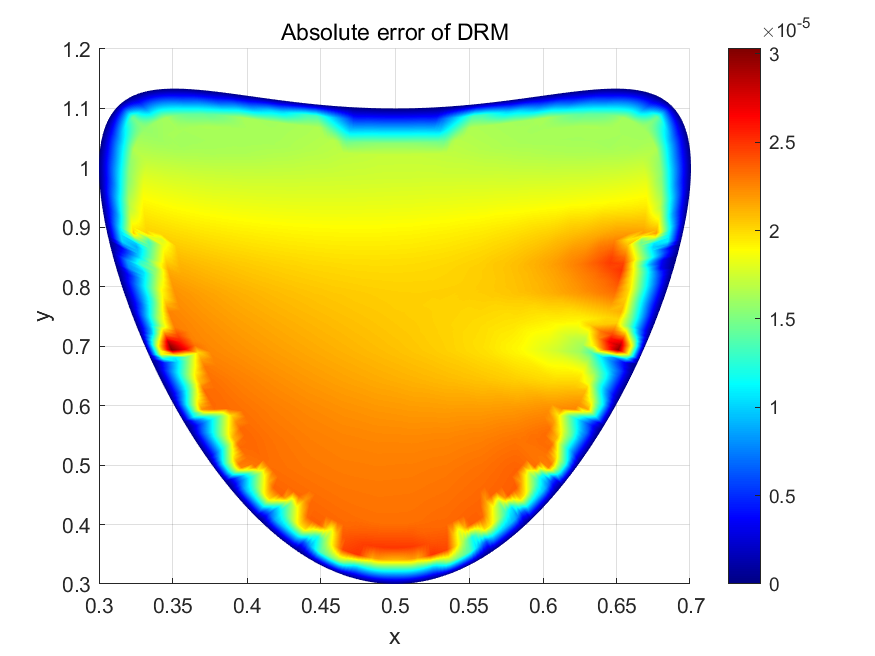}
\end{minipage}
}
\subfigure[Mesh of domain \eqref{nu1} by FEM.]{
\begin{minipage}[t]{0.33\linewidth}
\centering
\includegraphics[scale=0.3]{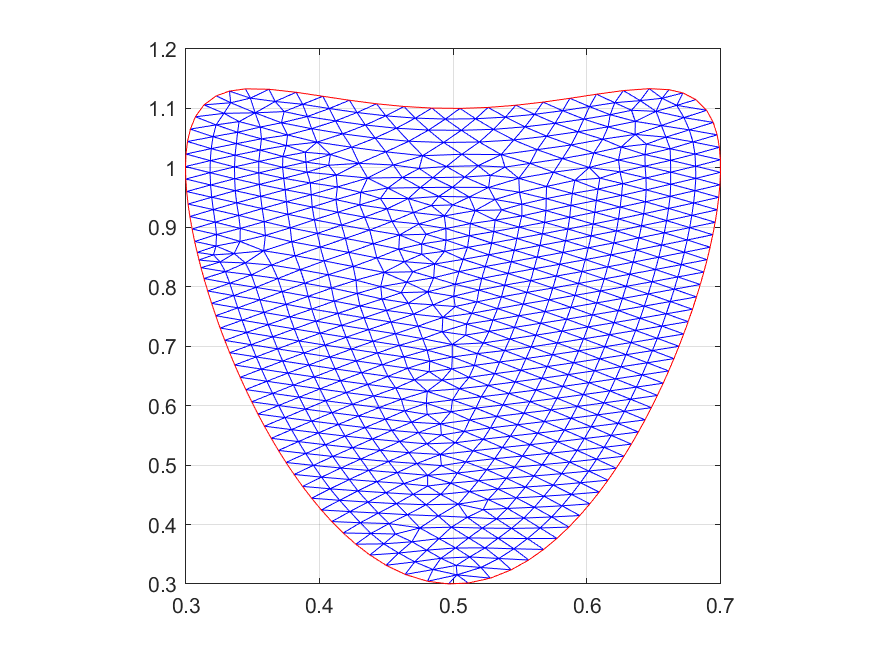}
\end{minipage}%
}%
\subfigure[Error for FEM in \eqref{nu1}.]{
\begin{minipage}[t]{0.33\linewidth}
\centering
\includegraphics[scale=0.28]{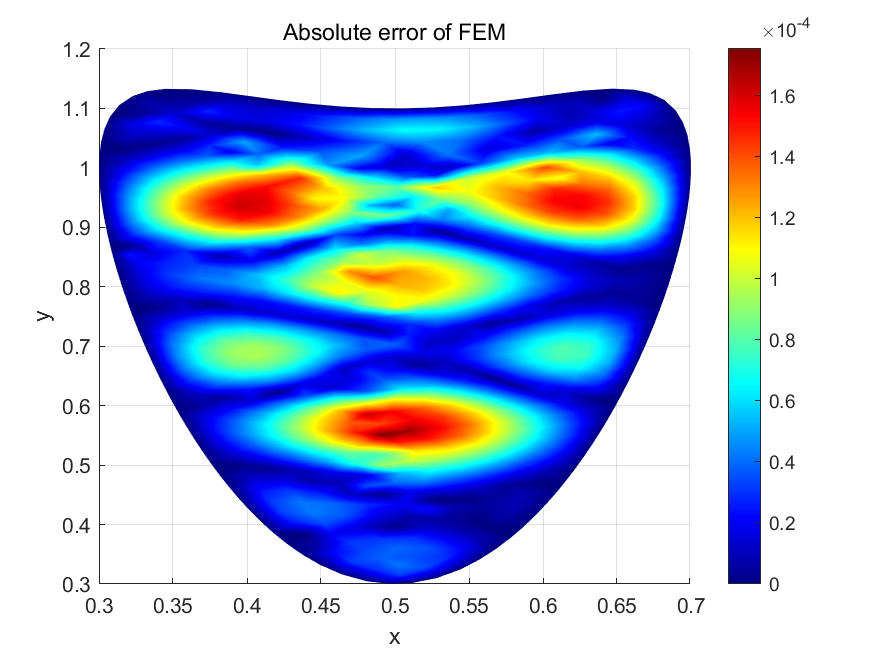}
\end{minipage}%
}
\\
\subfigure[Error for DRM in \eqref{nu2}.]{
\begin{minipage}[t]{0.3\linewidth}
\centering
\includegraphics[scale=0.28]{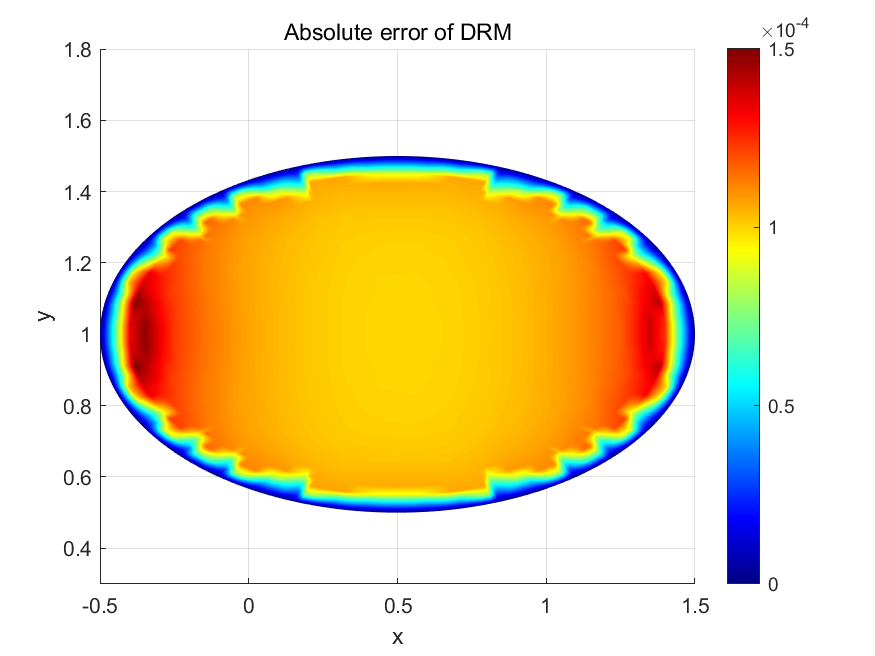}
\end{minipage}%
}%
\subfigure[Mesh of domain \eqref{nu2} by FEM.]{
\begin{minipage}[t]{0.33\linewidth}
\centering
\includegraphics[scale=0.28]{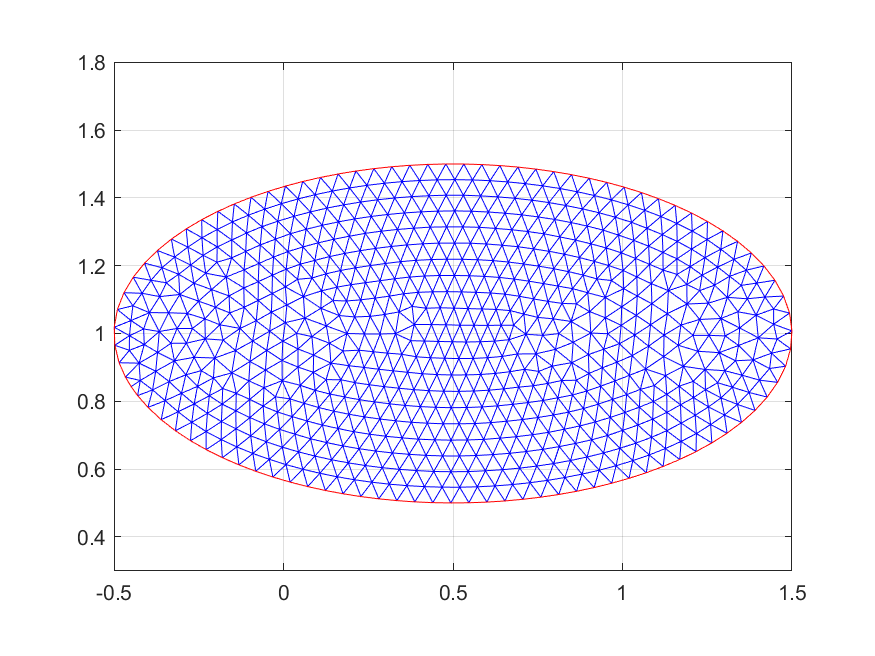}
\end{minipage}%
}%
\subfigure[Error for FEM in \eqref{nu2}.]{
\begin{minipage}[t]{0.33\linewidth}
\centering
\includegraphics[scale=0.28]{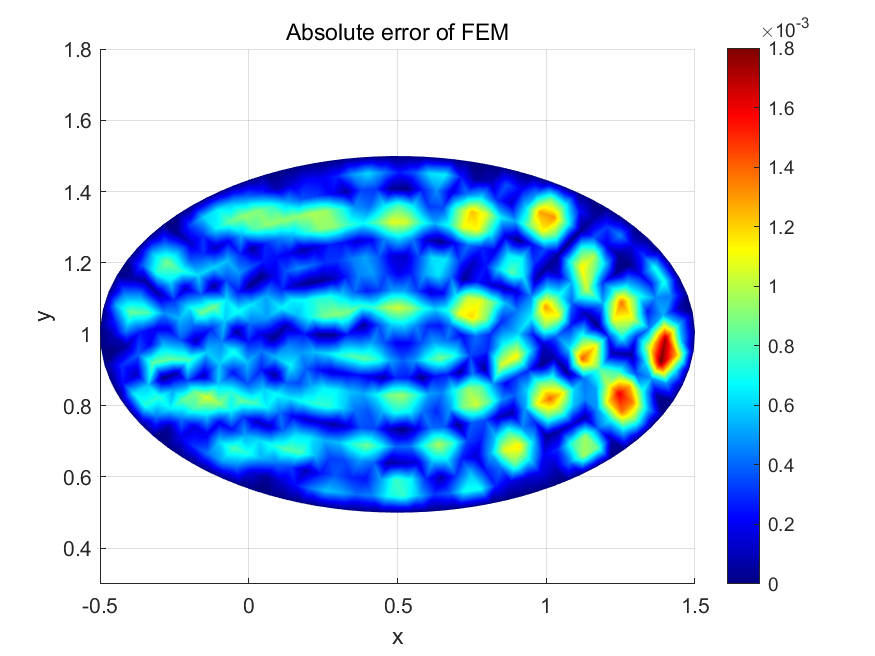}
\end{minipage}%
}
\centering
\caption{Numerical results of DRM and FEM in two domains.}
\label{liu-23-result1}
\end{figure}

\begin{table}[htbp]
\begin{center}
\caption{Numerics comparison for Example 1 in heart-shaped domain. }
\renewcommand{\arraystretch}{1.2}
\begin{tabular}{|c|c|c|c|c|}
\hline
&$Num_C$&$Err_m$&$Err_s$&$Time$\\
\hline
ADS &$856$&$2.3569\times 10^{-3}$&$3.9884\times 10^{-3}$&1.28768 s\\
\hline
DRM
&$708$&$2.0243\times 10^{-5}$&$1.1663\times 10^{-5}$&0.03191 s\\
\hline
FEM &$717$&$5.3564\times 10^{-5}$&$8.1742\times 10^{-5}$&0.48435 s\\
\hline
\end{tabular}
\label{liu23-DRM-Table-Ex1_x}
\end{center}
\end{table}

\begin{table}[htbp]
\begin{center}
\caption{Numerics comparison for Example 1 in elliptical domain. }
\renewcommand{\arraystretch}{1.2}
\begin{tabular}{|c|c|c|c|c|}
\hline
&$Num_C$&$Err_m$&$Err_s$&$Time$\\
\hline
ADS
&$856$&$4.3969\times 10^{-3}$&$3.6247\times 10^{-3}$&1.36882 s\\
\hline
DRM
&$720$&$1.0902\times 10^{-4}$&$6.6762\times 10^{-5}$&0.03326 s\\
\hline
FEM &$731$&$5.0242\times 10^{-4}$&$6.6272\times 10^{-4}$&0.47564 s\\
\hline
\end{tabular}
\label{liu23-DRM-Table-Ex1_p}
\end{center}
\end{table}

Now we compare the numerical performances of our proposed schemes ADS and DRM with FEM in the whole domain $\Omega$ in terms of average errors \eqref{liu-23-DRMErr_m} and \eqref{liu-23-DRMErr_s}, instead of the point-wise error shown in Figure \ref{liu-23-result1}. Table \ref{liu23-DRM-Table-Ex1_x} and Table \ref{liu23-DRM-Table-Ex1_p} present  $(Err_m,Err_s)$ of our schemes and FEM for domain \eqref{nu1}, domain \eqref{nu2}, respectively, where $Num_C$ is the number of collocation nodes.
It can be revealed that, even if  ADS chooses
more nodes ($856$), the accuracy ($O(10^{-3})$) is still lower than DRM and FEM, with longer computational time. However, our second scheme DRM is of the same error order as FEM for compatible collocation nodes. For domain $\eqref{nu2}$,  $Err_s$ of DRM ($O(10^{-5})$) is even lower than that for FEM ($O(10^{-4})$), as displayed in Table \ref{liu23-DRM-Table-Ex1_p}. It should be noted that our proposed scheme DRM takes less computational time than FEM, since mesh-generation of FEM is also time-consuming. However, DRM is a mesh-free scheme which chooses  internal nodes randomly.

We further investigate the computational accuracy of DRM  in terms of the number of internal grids. To this end, we keep the boundary grids unchanged but take different internal grids, as shown in Figure \ref{liu-23-examine-internal-nodes1} and Figure \ref{liu-23-examine-internal-nodes2}, where there are approximately $10, 15, 20, 30$ and $40$ internal grids for two domains. Such a configuration approximates density function $\tilde \mu$ by very few grids. We present the errors for $u$ in Table \ref{Table-DRM-examine-Ex1_x} and Table \ref{Table-DRM-examine-Ex1_p}, where $Num_I$ is the number of internal nodes.
\begin{figure}[htbp]
\subfigure{
\includegraphics[scale=0.2]{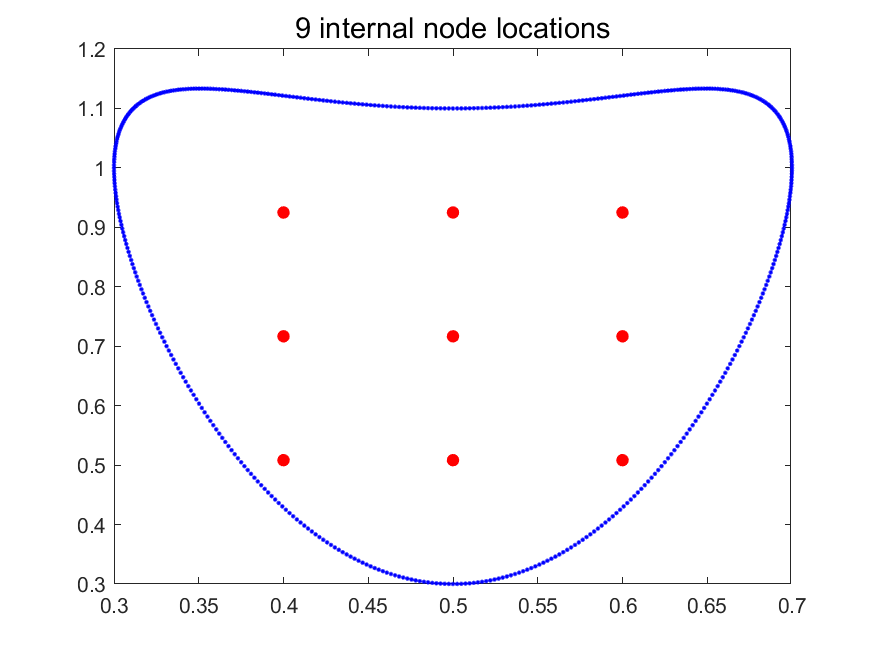}
\hspace{-0.2in}
}
\subfigure{
\includegraphics[scale=0.2]{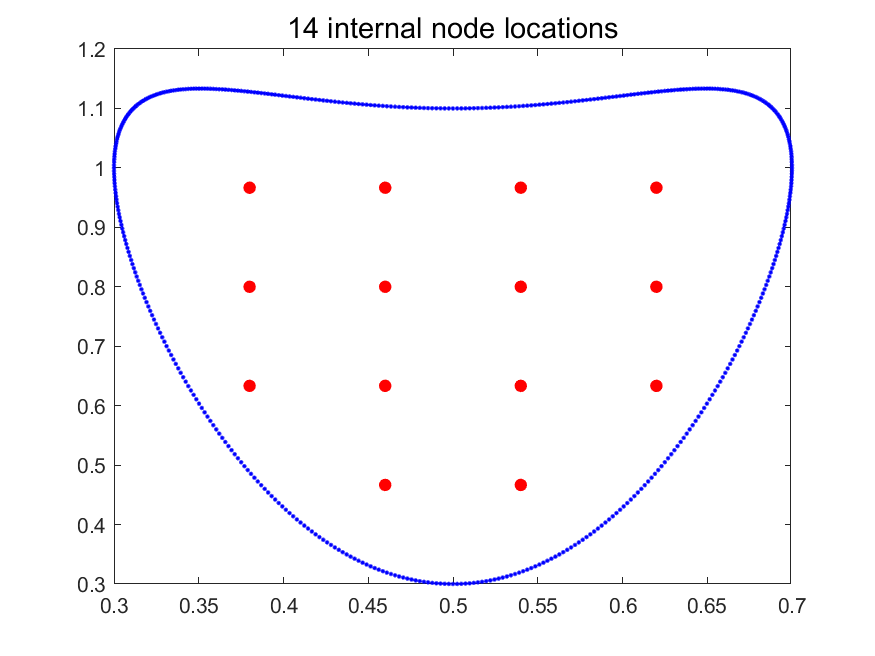}
\hspace{-0.2in}
}%
\subfigure{
\includegraphics[scale=0.2]{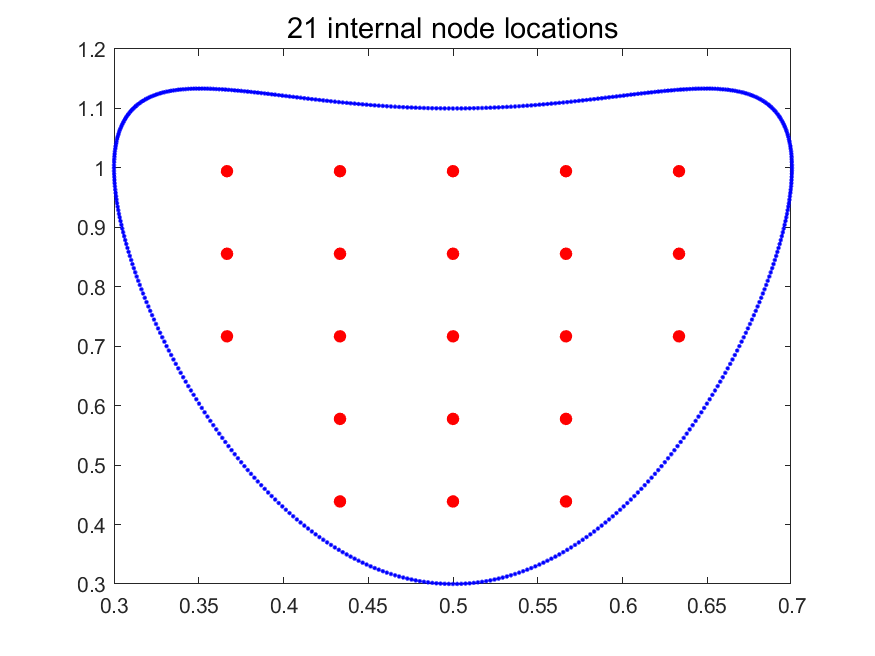}
\hspace{-0.25in}
}
\subfigure{
\includegraphics[scale=0.2]{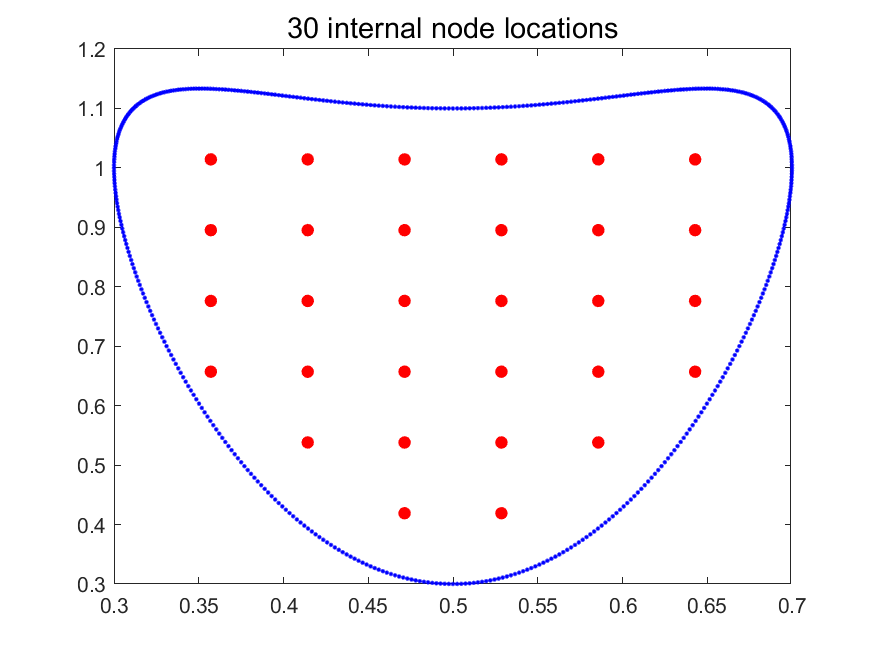}
\hspace{-0.25in}
}
\subfigure{
\includegraphics[scale=0.2]{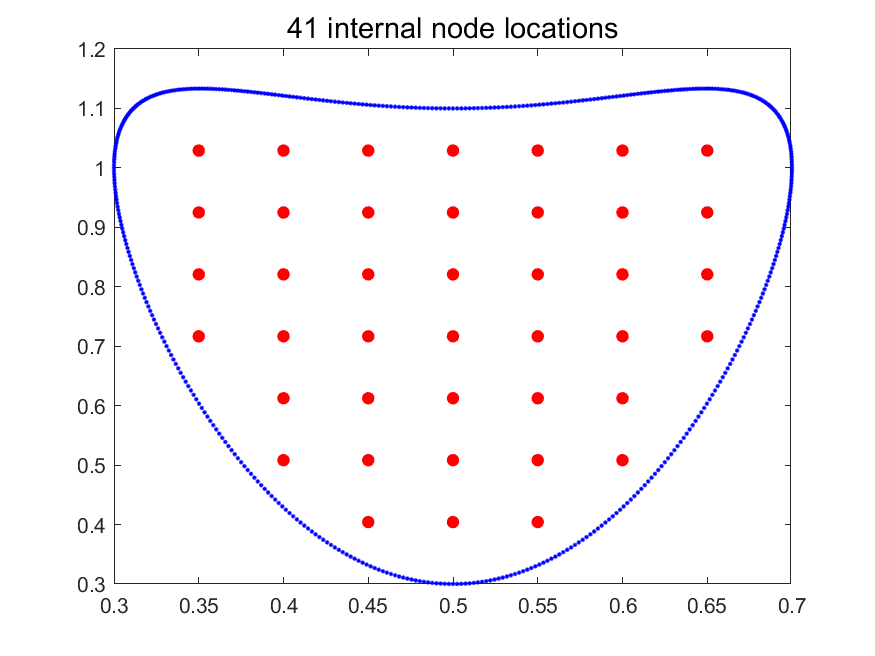}
}%
\caption{Different internal nodes distributions for domain \eqref{nu1}. }
\label{liu-23-examine-internal-nodes1}
\end{figure}
\begin{figure}[H]
\subfigure{
\includegraphics[scale=0.2]{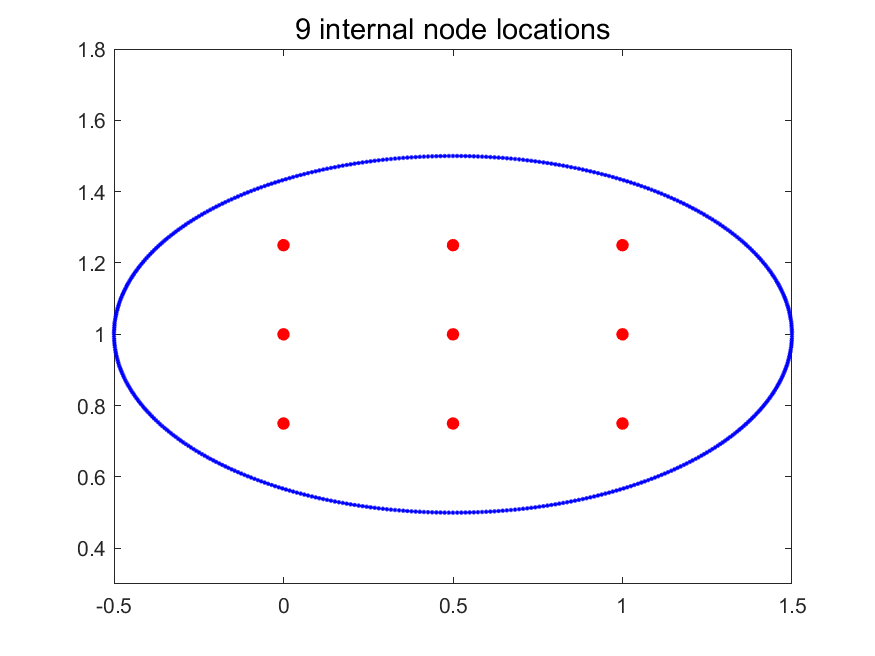}
\hspace{-0.2in}
}
\subfigure{
\includegraphics[scale=0.2]{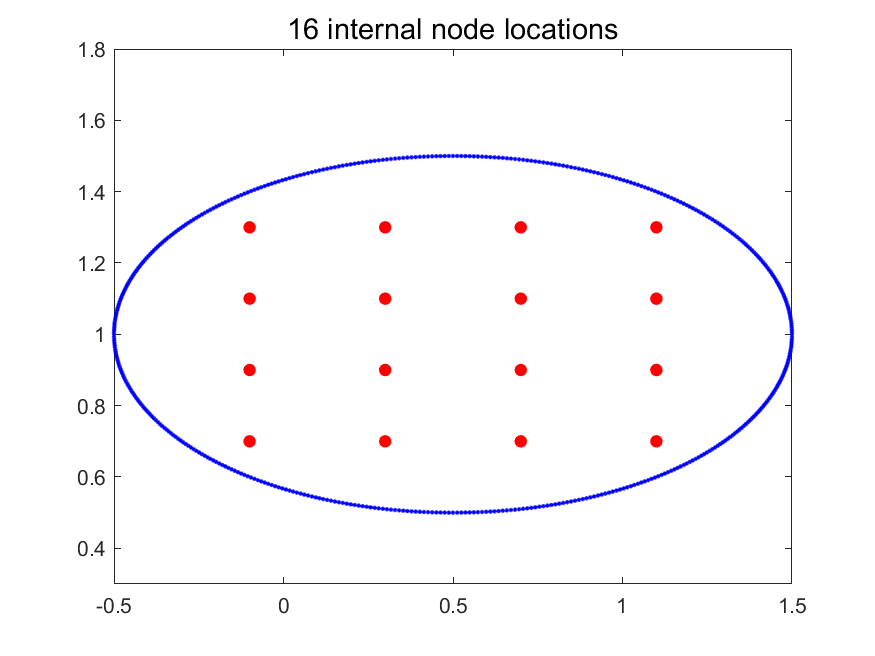}
\hspace{-0.2in}
}%
\subfigure{
\includegraphics[scale=0.2]{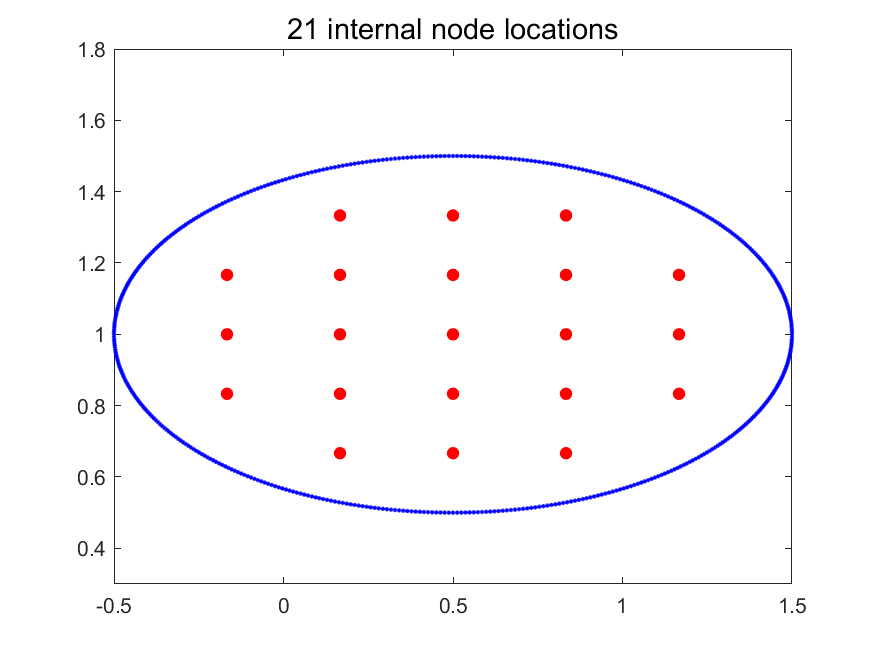}
\hspace{-0.25in}
}
\subfigure{
\includegraphics[scale=0.2]{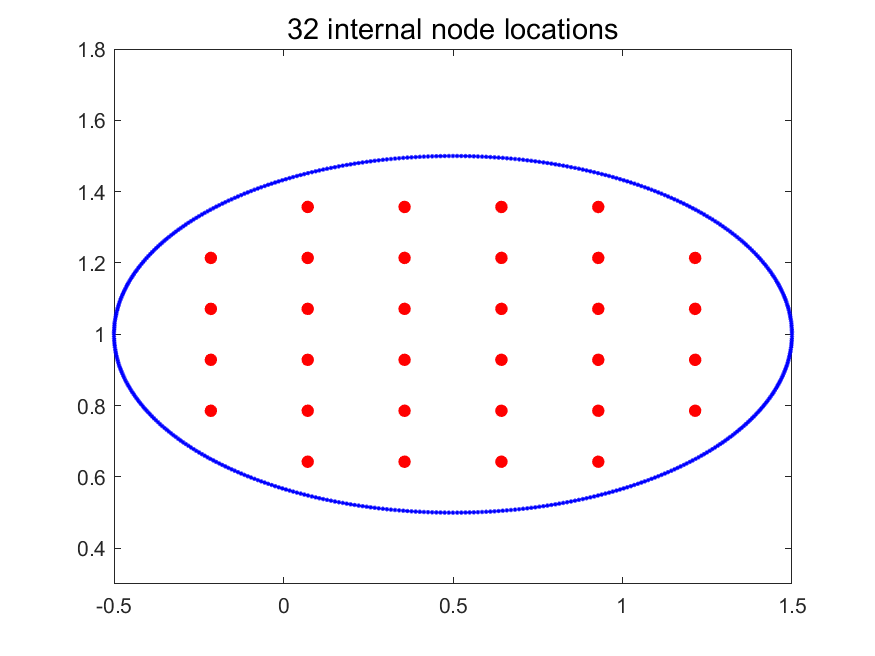}
\hspace{-0.25in}
}
\subfigure{
\includegraphics[scale=0.2]{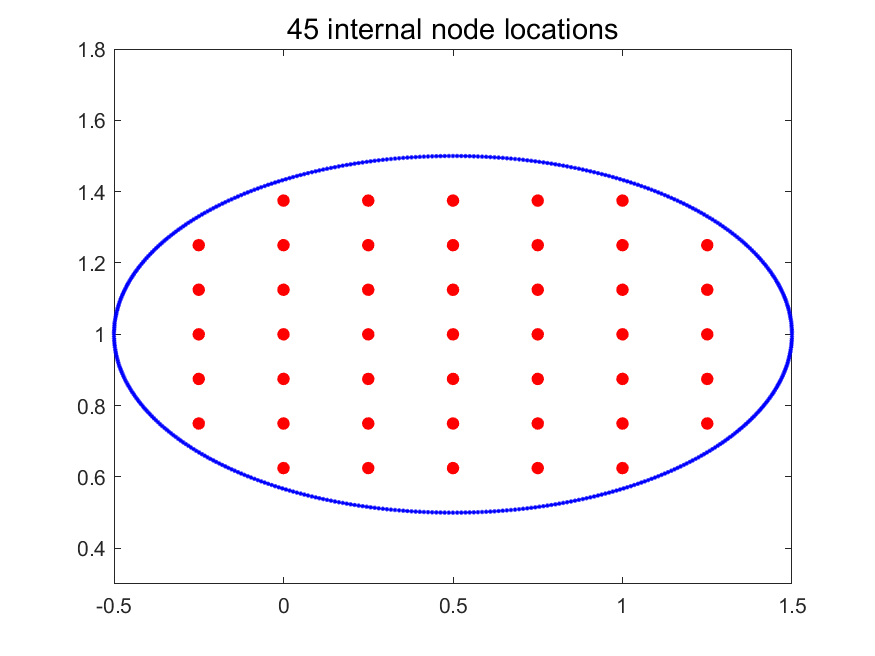}
}%
\caption{Different internal nodes distributions for domain \eqref{nu2}. }
\label{liu-23-examine-internal-nodes2}
\end{figure}

\begin{table}[ht]
\begin{center}
\caption{Error for Example 1 in heart-shaped domain. }
\renewcommand{\arraystretch}{1.2}
\begin{tabular}{|c|c|c|c|c|c|}
\hline
$Num_I$&9&14&21&30&41\\
\hline
$Err_m$ &$2.007\times 10^{-4}$&$9.061\times 10^{-5}$&$4.702\times 10^{-5}$&$3.323\times 10^{-5}$&$2.496\times 10^{-5}$\\
\hline
$Err_s$&$6.213\times 10^{-4}$&$6.818\times 10^{-5}$&$3.380\times 10^{-5}$&$2.234\times 10^{-5}$&$1.564\times 10^{-5}$\\
\hline
\end{tabular}
\label{Table-DRM-examine-Ex1_x}
\end{center}
\end{table}

\begin{table}[ht]
\begin{center}
\caption{Error for Example 1 in elliptical domain.}
\renewcommand{\arraystretch}{1.2}
\begin{tabular}{|c|c|c|c|c|c|}
\hline
$Num_I$&9&16&21&32&45\\
\hline
$Err_m$ &$1.570\times 10^{-3}$&$5.539\times 10^{-4}$&$4.683\times 10^{-4}$&$2.008\times 10^{-4}$&$1.396\times 10^{-4}$\\
\hline
$Err_s$&$1.231\times 10^{-3}$&$4.970\times 10^{-4}$&$3.672\times 10^{-4}$&$1.762\times 10^{-4}$&$1.095\times 10^{-4}$\\
\hline
\end{tabular}
\label{Table-DRM-examine-Ex1_p}
\end{center}
\end{table}

From Table \ref{Table-DRM-examine-Ex1_x} and Table \ref{Table-DRM-examine-Ex1_p}, it can be seen that our proposed DRM scheme can also yield satisfactory results which are good approximations to the exact solutions, even if we apply very few internal grids. For example, for 9 internal grids, the mean absolute errors are
$O(10^{-4})$ and $O(10^{-3})$ for  \eqref{nu1} and \eqref{nu2} respectively,  while the root mean square relative errors are $O(10^{-4})$ and $O(10^{-3})$, which are satisfactory for the numerics of our boundary value problem.

From Table \ref{liu23-DRM-Table-Ex1_x} and Table \ref{liu23-DRM-Table-Ex1_p} for 2-dimensional domain $\Omega$,  ADS needs to apply a large number of points and consequently spends longer time  to achieve satisfactory accuracy. Therefore, in the following 3-dimensional example, we only consider DRM for numerical realizations.

\vskip 0.3cm

{\bf Example 2. A 3-dimensional model.}

We take a pinched ball  to be $\Omega\subset \mathbb{R}^3$ for finding our solution $u$, with the boundary surface
$$
\partial \Omega=\left \{ r(\theta,\varphi)x(\theta,\varphi):\; x(\theta,\varphi)
=(\sin\theta\cos\varphi, \sin\theta\sin\varphi, \cos\theta)\in \mathbb{S}^2, \theta \in [0,\pi], \varphi \in [0,2\pi]\right\},
$$
where $r(\theta,\varphi)=\sqrt{(1.44+0.5\cos2\varphi(\cos2\theta-1)}$, see Figure \ref{liu-23-Ex3_surface} for the geometric configuration.

\begin{figure}[htbp]
\centering
\includegraphics[scale=0.43]{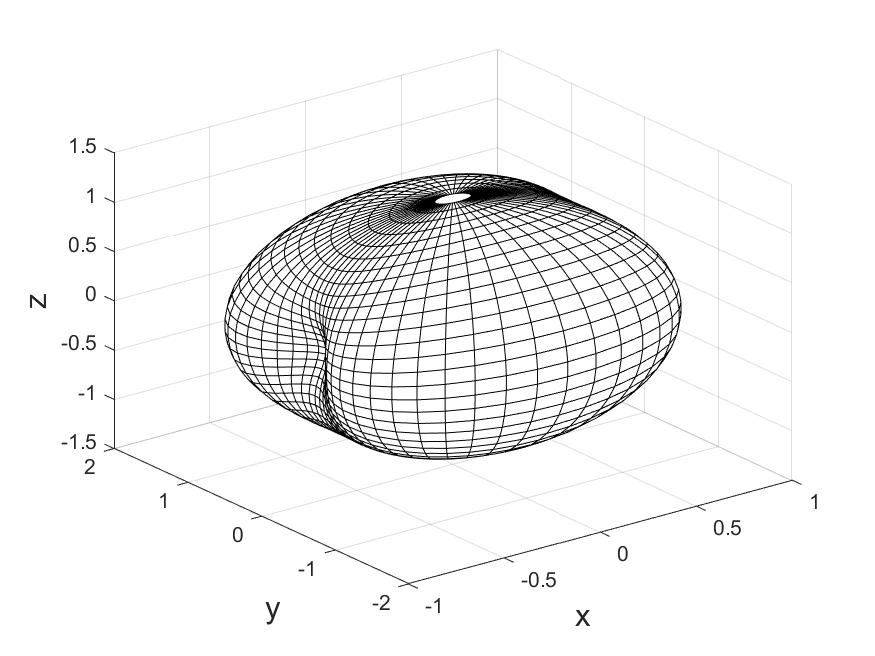}
\centering
\caption{Geometric shape for the pinched ball $\Omega$.}
\label{liu-23-Ex3_surface}
\end{figure}

We give the conductivity in $\Omega$ as
$$\sigma(x,y,z)=x^2y+2(y+z^2)+2$$
and the internal source function
$$F(x,y,z)=-(6y+2z)x^2-8yz-4y-20z-4z^2-4.$$
Then $u_{ex}(x,y,z)=x^2+2(y+2)z+1$
is the exact solution to \eqref{liu11-01} for corresponding $f\equiv u|_{\partial\Omega}$.

\begin{figure}[htbp]
\centering
\subfigure[Front view]{
\centering
\includegraphics[scale=0.3]{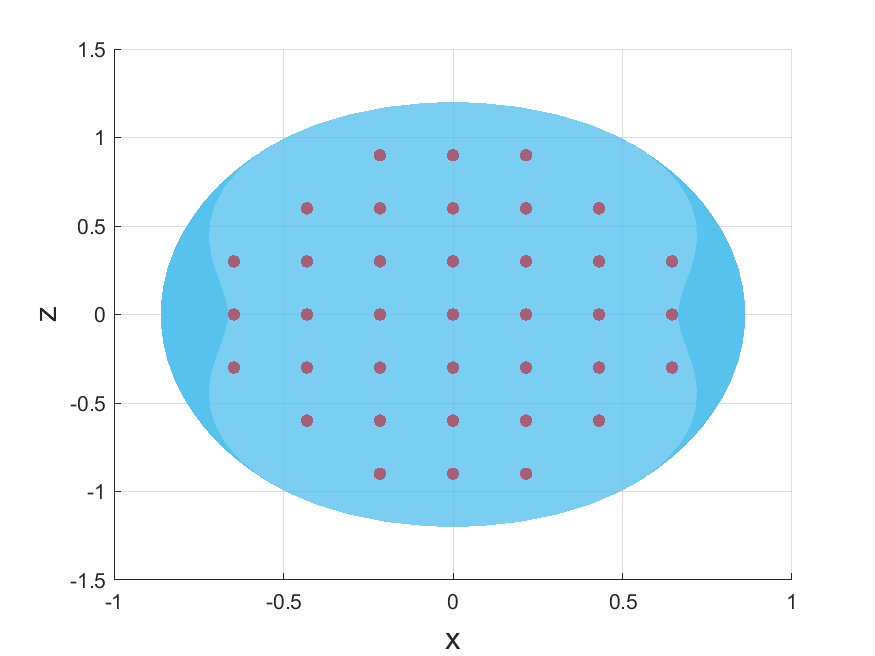}
}%
\subfigure[Right side view]{
\centering
\includegraphics[scale=0.3]{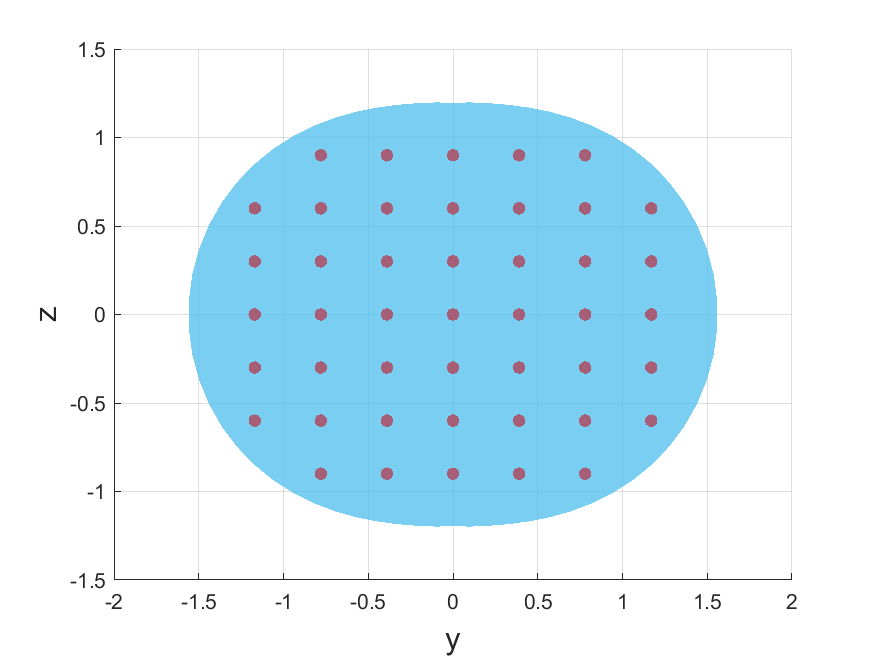}
}
\subfigure[Top view]{
\centering
\includegraphics[scale=0.3]{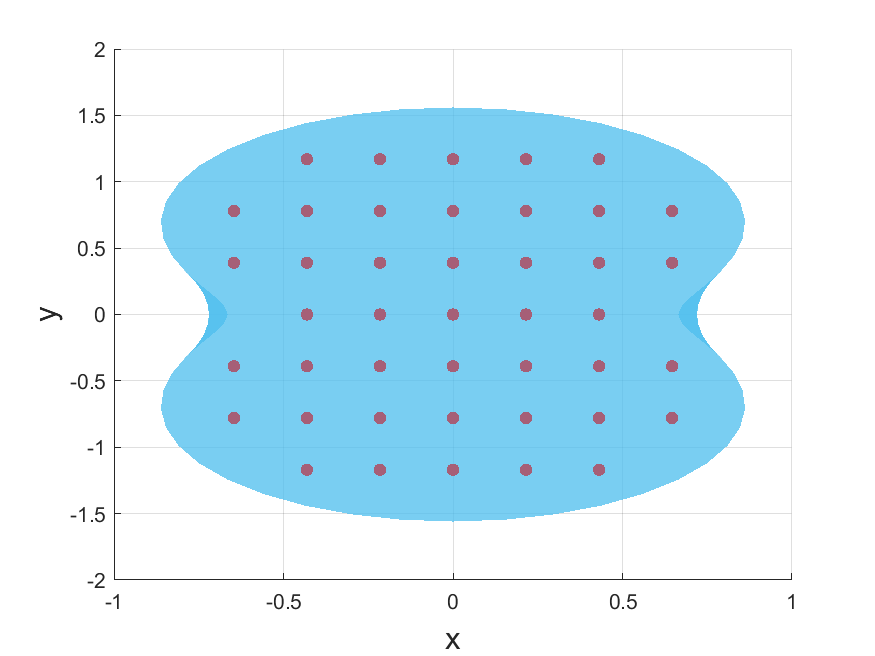}
}
\centering
\caption{Three views of 197 equally spaced internal nodes for $\Omega$.}
\label{liu-23-Ex3-internal nodes}
\end{figure}
We choose a set of quadrature nodes $x_{jk}$  on the unit sphere $\mathbb{S}^2$ in terms of the polar coordinates by \cite{Colton2}
$$
x_{jk}:\equiv(\sin \theta_j \cos \varphi_k, \sin \theta_j \sin \varphi_k, \cos \theta_j),\quad j=1,\cdots,N,\;  k=0,\cdots,2N-1,
$$
where $\theta_j = \arccos t_j, \varphi_k=\pi k/N$ for  the Gauss points $t_j$ in $[-1,1]$. Then we obtain $2N^2$ surface nodes $q_{jk}:\equiv r(\theta_j,\varphi_k)x_{jk}\in \partial\Omega$.
As for internal  nodes for $\Omega$, we take $197$ points  equally spaced in $\Omega$,  the distribution in $\Omega$ from  three views is shown in Figure \ref{liu-23-Ex3-internal nodes}.

\begin{figure}[htbp]
\centering
\subfigure[Exact solution]{
\centering
\includegraphics[scale=0.22]{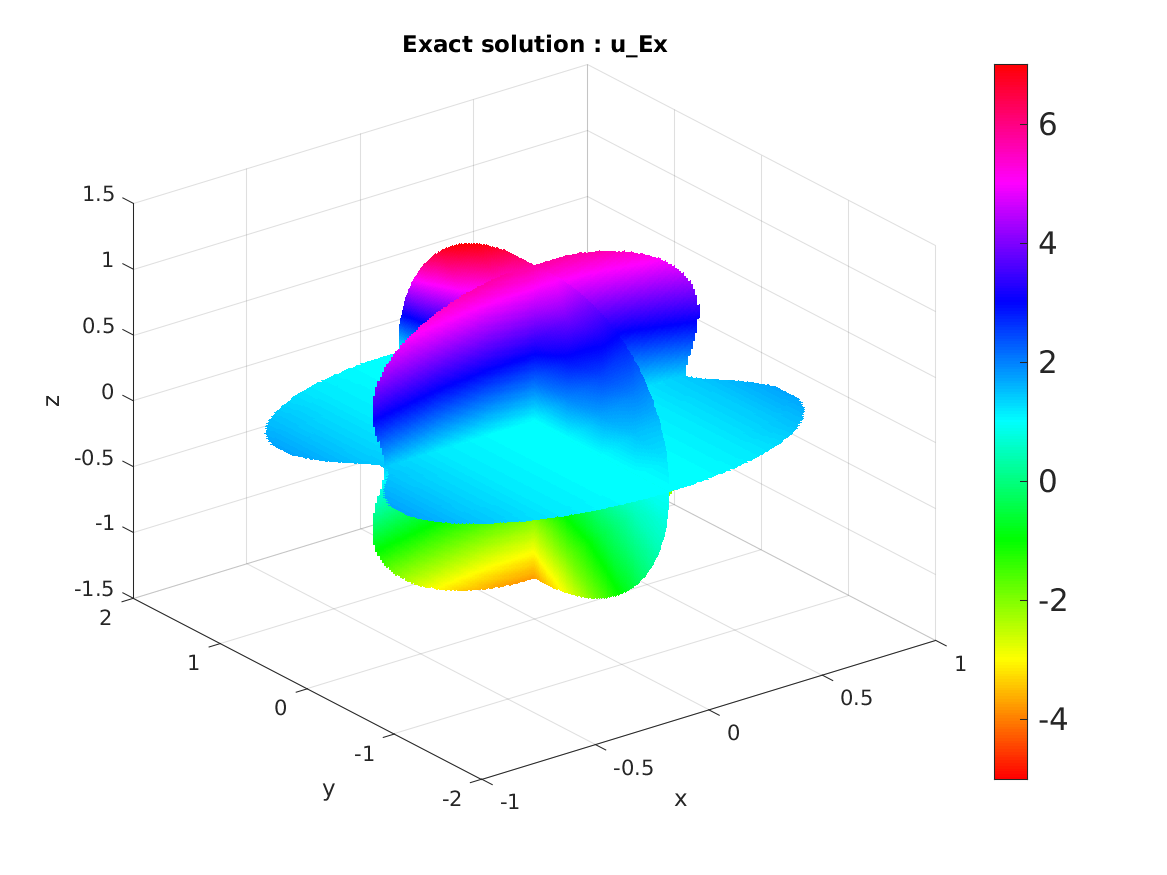}
}
\subfigure[Numerical solution]{
\centering
\includegraphics[scale=0.22]{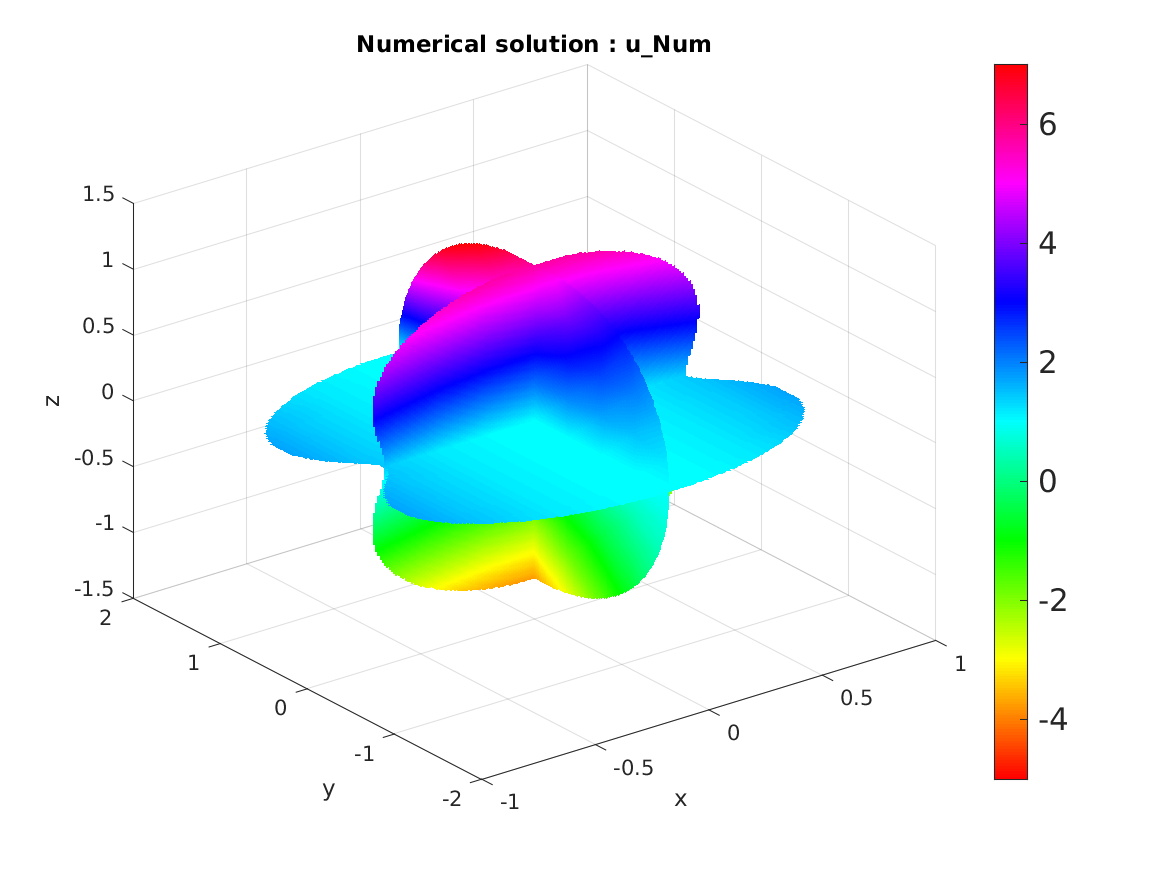}
}
\subfigure[Absolute error]{
\centering
\includegraphics[scale=0.22]{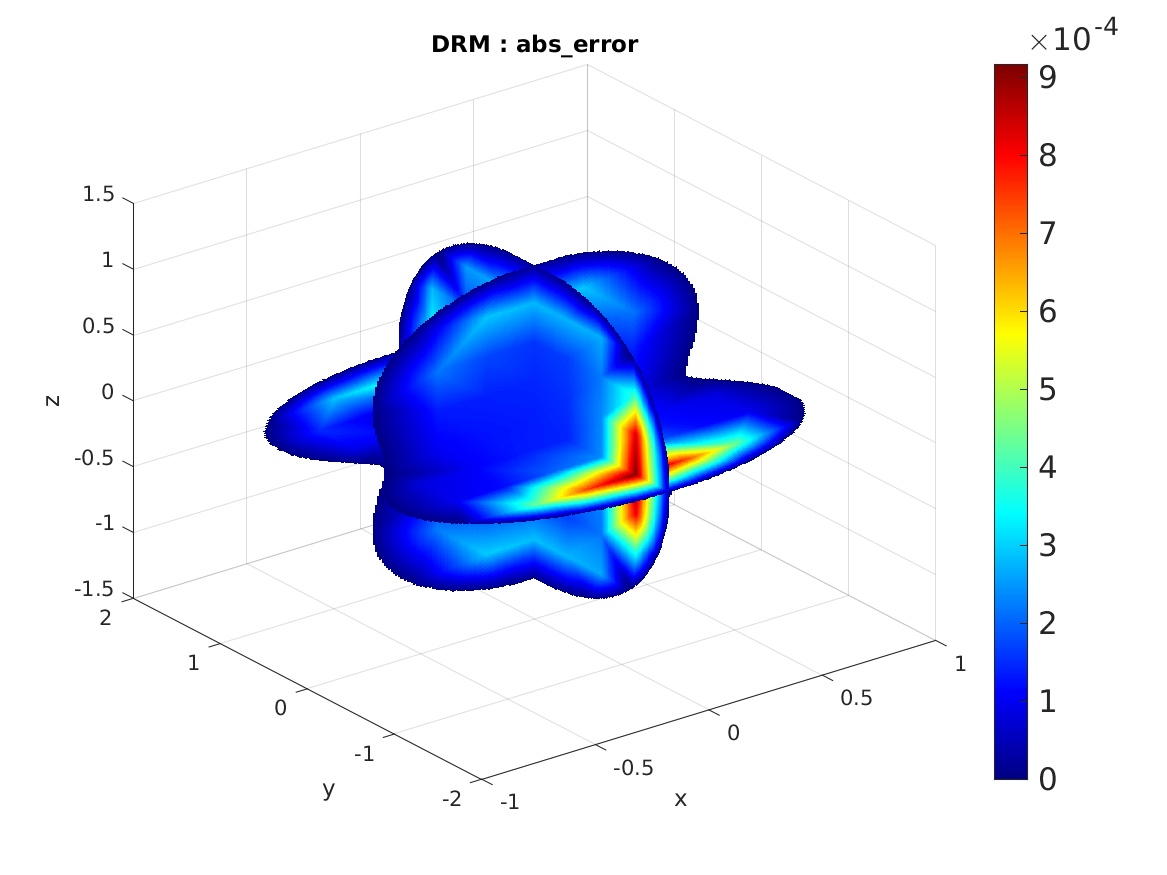}
}
\centering
\caption{Numerical results for Example 2 with $N=64$.}
\label{liu-23-Numerical result3}
\end{figure}

In Figure \ref{liu-23-Numerical result3}, we show the numerical performances of our DRM scheme with $N=64$ at three slice planes orthogonal to $x, y, z$ axis, respectively. The maximum absolute error is $9.1935\times 10^{-4}$, while the mean absolute error $Err_m$ and mean root square relative error $Err_s$ defined by \eqref{liu-23-DRMErr_m}, \eqref{liu-23-DRMErr_s} are $2.3467\times 10^{-4}$ and $1.2059\times 10^{-4}$. It can be observed that our DRM scheme based on the Levi function representation can solve 3-dimensional problems with satisfactory numerical performances, including accuracy and computational cost. On the other hand, the larger errors are generally appear near to the boundary $\partial\Omega$. This phenomenon may be due to the effect of singularity of fundamental solution for Laplacian equation at those nodes.

In order to test the influence of the number of internal nodes, we consider the configurations of $15, 27, 79, 136-$internal nodes for three set of surface nodes ($N=16, 32, 64$). The internal nodes locations are shown in
Figure \ref{liu-23-examine-internal-nodes3} from the top view. We compute the density pair $(\tilde \mu,\tilde \psi)$ at these relatively few nodes and then
compute  $u(x,y,z)$ in the whole domain $\Omega$ in terms of the Levi function representation. The results are shown in Table \ref{liu-23-Table-Ex3} for different configurations of boundary and internal nodes, in terms of the error distributions and computational time.
\begin{figure}[htbp]
\centering
\subfigure[]{
\includegraphics[scale=0.22]{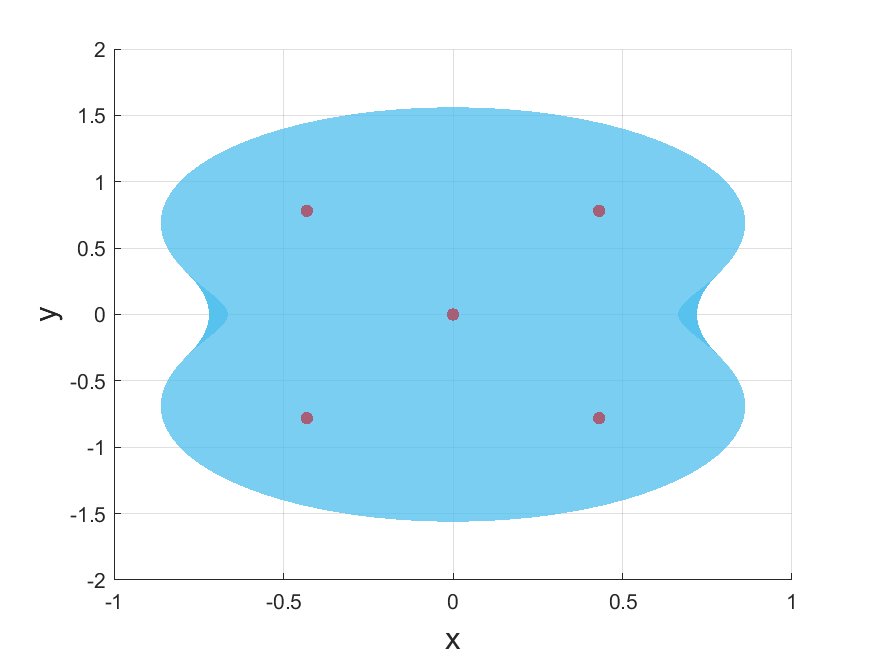}
}
\centering
\subfigure[]{
\includegraphics[scale=0.22]{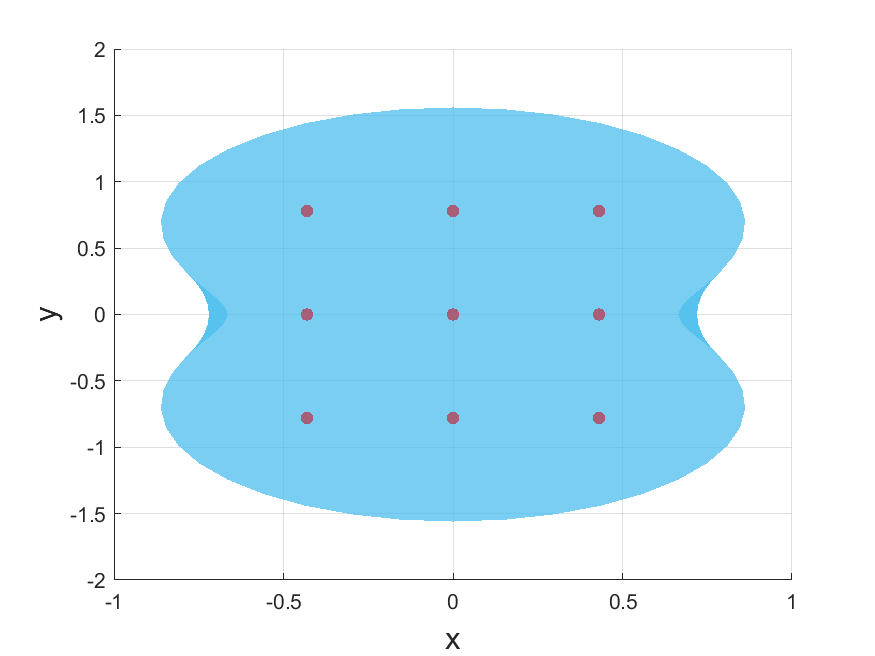}
}%
\centering
\subfigure[]{
\includegraphics[scale=0.22]{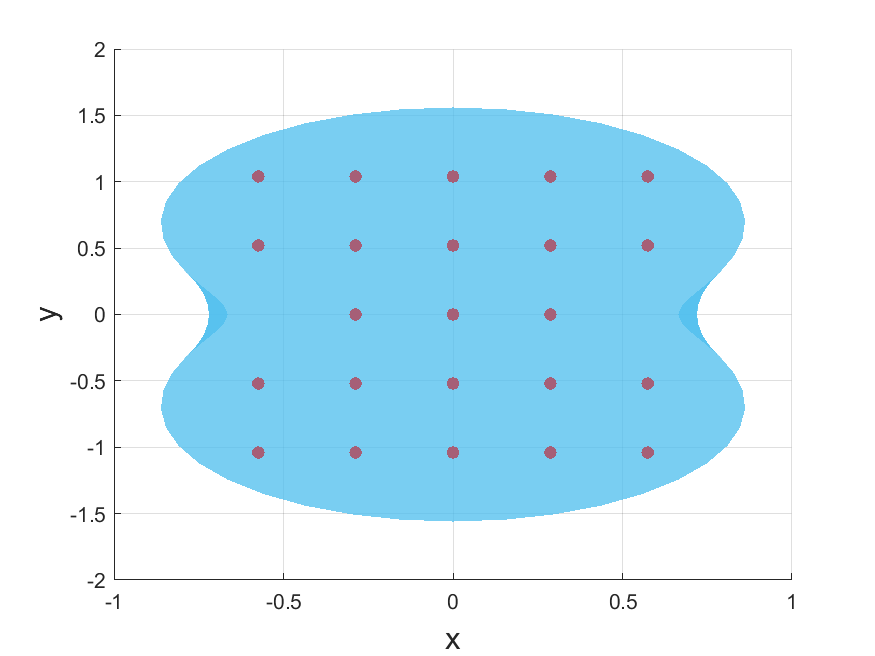}
}
\centering
\subfigure[]{
\includegraphics[scale=0.22]{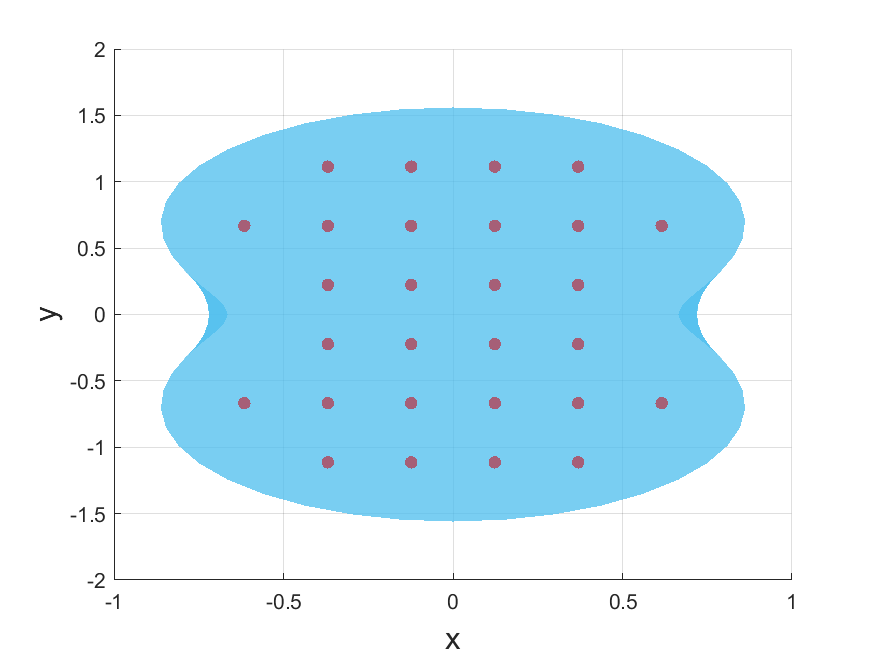}
}%
\centering
\caption{ Different internal nodes from top view: (a) 15 internal nodes; (b) 27 internal nodes; (c) 79 internal nodes; (d) 136 internal nodes.}
\label{liu-23-examine-internal-nodes3}
\end{figure}

\begin{table}[ht]
\begin{center}
\caption{Errors and computational time of Example 2 for different configurations.}
\renewcommand{\arraystretch}{1.2}
\begin{tabular}{|c|c|c|c|c|c|}
\hline
&$Num_I$&15&27&79&136\\
\hline
N=16&$Err_m$ &$1.5424\times 10^{-2}$&$1.2205\times 10^{-2}$&$1.0253
\times 10^{-2}$&$0.9422
\times 10^{-2}$\\
\hline
&$Err_s$&$1.3823
\times 10^{-2}$&$1.3128\times 10^{-2}$&$1.2035\times 10^{-2}$&$1.2364
\times 10^{-2}$\\
\hline
&$Time$&0.9240 s
&0.9737 s&1.1476 s&1.5368 s\\
\hline
N=32&$Err_m$ &$2.7741\times 10^{-3}$&$1.8511\times 10^{-3}$&$5.1004\times 10^{-4}$&$3.5751\times 10^{-4}$\\
\hline
&$Err_s$&$1.5346\times 10^{-3}$&$1.0089\times 10^{-3}$&$3.1122\times 10^{-4}$&$2.4034\times 10^{-4}$\\
\hline
&$Time$&61.285 s
&61.918 s&73.720 s&90.568s\\
\hline
N=64&$Err_m$ &$2.3722\times 10^{-3}$&$1.8523\times 10^{-3}$&$4.9840\times 10^{-4}$&$3.1304\times 10^{-4}$\\
\hline
&$Err_s$&$1.3361\times 10^{-3}$&$1.0025\times 10^{-3}$&$2.8860\times 10^{-4}$&$1.7465\times 10^{-4}$\\
\hline
&$Time$&2701.6 s
&2720.5 s&2853.7 s&2855.8 s\\
\hline
\end{tabular}
\label{liu-23-Table-Ex3}
\end{center}
\end{table}


From Table \ref{liu-23-Table-Ex3}, it can be seen that for $N=16$, the computation time is shortest but the accuracy is the lowest ($O(10^{-2})$). For $N=32,64$, all the numerical results  are very close to exact solution with satisfactory error levels. Even for $15$ internal nodes, the mean absolute error and mean root square relative error are up to $O(10^{-3})$. Comparing the results for $N=32$ and $N=64$, the errors are very close, but the computational time is quite different. Consequently, we can apply relatively few boundary and internal nodes by our DRM scheme to get satisfactory numerical results.

\vskip 0.3cm

{\bf Acknowledgements: } This work is supported by NSFC (No.11971104).

\vskip 0.5cm

\end{document}